\documentclass[11pt]{article}

\usepackage[letterpaper,margin=1.25in]{geometry}

\usepackage{amsmath,amssymb,amsfonts,amsthm}
\usepackage{mathtools}
\usepackage{mathrsfs}
\usepackage{appendix}

\usepackage{enumitem}
\usepackage{xcolor}
\usepackage[
    colorlinks=true,
    linkcolor=blue,
    citecolor=blue,
    urlcolor=blue
]{hyperref}

\numberwithin{equation}{section}
\allowdisplaybreaks

\theoremstyle{plain}
\newtheorem{theorem}{Theorem}[section]
\newtheorem{proposition}[theorem]{Proposition}
\newtheorem{lemma}[theorem]{Lemma}
\newtheorem{corollary}[theorem]{Corollary}

\theoremstyle{definition}

\theoremstyle{remark}
\newtheorem{remark}{Remark}

\title{Liouville-type theorems for the stationary fractional Navier--Stokes equations in $\mathbb{R}^n$}

\author{Jihoon Lee and Juhyeong Lee}

\date{}

\begin{document}

\maketitle

\begin{abstract}
We establish Liouville-type theorems for the stationary fractional Navier--Stokes equations in $\mathbb{R}^n$ under suitable integrability conditions on the velocity field $u$ and a large-scale Morrey-type bound on the fractional energy. As a corollary, these assumptions are automatically satisfied if $u \in \dot{H}^{\frac{\alpha}{2}}(\mathbb{R}^n)$, yielding Liouville-type results under the finite fractional energy condition for $\frac{n}{3} \le \alpha < \frac{n+2}{3}$, where $\alpha$ denotes the order of the fractional Laplacian $(-\Delta)^{\frac{\alpha}{2}}$. This range reflects a scaling-critical correspondence between Liouville-type theorems in the finite-energy setting and the threshold arising in partial regularity theory. The proof relies on direct kernel estimates for the commutator of the fractional Laplacian, based on a dyadic decomposition of the tail term, which remain valid in the hyper-dissipative case. The argument also uses a bootstrap argument that propagates integrability from near the scaling-invariant exponent down to lower exponents, including the Sobolev embedding exponent.

\medskip \noindent\textbf{Keywords:} Liouville-type theorems, fractional Laplacian, Navier--Stokes equations

\medskip \noindent\textbf{2020 Mathematics Subject Classifications:} 35B53, 35R11, 76D05, 76W05
\end{abstract}

\section{Introduction}

In this paper, we study the stationary fractional Navier--Stokes equations in $\mathbb{R}^n$: 
\begin{equation} \label{eq:1.1} \tag{1.1}
\begin{cases}
(-\Delta)^{\frac{\alpha}{2}}u + (u \cdot \nabla) u +\nabla p  = 0, \\
\nabla\cdot u = 0,
\end{cases}
\end{equation}
with the uniform decay condition
\[
u(x)\to 0\quad \text{as} \quad |x|\to+\infty\ .
\]
Here, $u = (u_1(x),..., u_n(x))$ denotes the velocity field of the fluid and $p = p(x)$ denotes the pressure of the fluid. The fractional Laplacian $(-\Delta)^{\frac{\alpha}{2}}$ is defined as the Fourier multiplier with symbol $|\xi|^{\alpha}$, where $\alpha \in (0,4)$.

Before turning to our model, we recall the classical Liouville problem for the stationary Navier--Stokes equations in $\mathbb{R}^n$, corresponding to the case with $\alpha = 2$ in \eqref{eq:1.1}. It is unknown whether any smooth solution $u$ to 
\begin{equation} \label{eq:1.2} \tag{1.2}
-\Delta u + (u \cdot \nabla) u + \nabla p = 0, 
\quad \nabla \cdot u = 0, \quad
\lim_{|x|\to+\infty} u(x) = 0, \quad
 \int_{\mathbb{R}^n} |\nabla u|^2 \, dx < +\infty    
\end{equation}
must be trivial. This problem was introduced in \cite{21}, where Galdi established Liouville-type theorems for \eqref{eq:1.2} under the integrability conditions \(u \in L^{\frac{9}{2}}(\mathbb{R}^3)\) for $n=3$ and \(u \in L^{\frac{2n}{n-2}}(\mathbb{R}^n)\) for $n \ge 4$. Since $\dot H^1(\mathbb{R}^n)\hookrightarrow L^{\frac{2n}{n-2}}(\mathbb{R}^n)$, the condition for $n \ge 4$ corresponds to the finite Dirichlet energy, whereas this is no longer the case in dimension three.

Although the problem remains open in $\mathbb{R}^3$, several Liouville-type results have been obtained under various assumptions. Using the sign structure of the head pressure $Q:=p+\frac{1}{2}|u|^2$, Chae \cite{3} proved triviality under the condition $\Delta u \in L^{\frac{6}{5}}(\mathbb{R}^3)$, which has the same scaling as $\nabla u \in L^2(\mathbb{R}^3)$. Chae and Wolf \cite{4} obtained a logarithmic improvement of $L^{\frac{9}{2}}(\mathbb{R}^3)$ by replacing it with $\int_{\mathbb{R}^3} |u|^{\frac{9}{2}} \left\{\log\!\left(2+\frac{1}{|u|}\right)\right\}^{-1} dx < +\infty$. They also established Liouville-type results under suitable integrability conditions on $Q$ by exploiting the head pressure equation. Further Liouville-type results based on the head pressure were obtained in \cite{6,12,13}. On the other hand, Seregin \cite{28} established the result under $u \in L^6(\mathbb{R}^3)$ together with $u \in \mathrm{BMO}^{-1}(\mathbb{R}^3)$, which has the same scaling property as $L^3(\mathbb{R}^3)$. Motivated by this scaling structure, Seregin \cite{29} extended these criteria to homogeneous Morrey spaces $\dot{M}^{p,q}(\mathbb{R}^3)$, proving triviality under $u \in \dot{M}^{2,6}(\mathbb{R}^3) \cap \dot{M}^{\frac{3}{2},3}(\mathbb{R}^3)$. This was further extended by Chamorro, Jarrín, and Lemarié-Rieusset \cite{14}, who established triviality under the condition $u \in \dot{M}^{2,3}(\mathbb{R}^3) \cap \dot{M}^{2,q}(\mathbb{R}^3)$ with $3 < q < +\infty$, replacing $\dot{M}^{2,6}(\mathbb{R}^3)$ with the more flexible space $\dot{M}^{2,q}(\mathbb{R}^3)$. We refer to \cite{5,10,24,27,30,31,36} for several related results. 

The standard proof relies on a Caccioppoli-type inequality. For $R>1$, let $B_R$ denote the ball of radius $R$ centered at the origin and the annulus $A(R):=B_{2R}\setminus B_{R}$. The following estimate holds for any smooth solution $u$ of \eqref{eq:1.2}:
\begin{equation} \label{eq:1.3} \tag{1.3}
\int_{B_R} |\nabla u|^2\,dx 
\le CR^{n-2-\frac{2n}{p}}\|u\|_{L^p(A(R))}^2
+ CR^{n-1-\frac{3n}{q}}\|u\|_{L^q(A(R))}\,\|u\|_{L^{q}(\mathbb{R}^n)}^{2},
\end{equation}
where $2 \le p \le \tfrac{2n}{n-2}$ and $3 \le q \le \tfrac{3n}{n-1}$. In dimension $n=3$, the Sobolev embedding exponent $\frac{2n}{n-2}$ is strictly larger than the exponent $\tfrac{3n}{n-1}$ arising from the nonlinear term, which restricts the argument. For the fractional case \eqref{eq:1.1}, the Sobolev embedding exponent becomes $\tfrac{6}{3-\alpha}$, which does not exceed $\tfrac{9}{2}$ for $0<\alpha\le \tfrac{5}{3}$. This raises the question of whether the argument still works under this range of $\alpha$, and in particular, whether the exponent $\tfrac{3n}{n-1}$ still restricts the range of admissible $\alpha$ in higher dimensions.

Motivated by these questions, we consider the following scaling-critical Liouville-type problem for \eqref{eq:1.1}: Any smooth solution $u \in \dot H^{\frac{\alpha}{2}}(\mathbb{R}^n)$ of \eqref{eq:1.1} satisfying the uniform decay condition for $\frac{n}{3} \le \alpha \le \frac{n+2}{3}$ must be trivial.

Here, scaling-critical refers to the range of $\alpha$ arising in partial regularity theory proved in \cite{16,23,33,35}. Indeed, for $n=3,4,5$ and $\frac{n}{3}\le \alpha<\frac{n+2}{3}$ with $\alpha\neq 1$, any suitable weak solution $u\in \dot H^{\frac{\alpha}{2}}(\mathbb{R}^n)$ of \eqref{eq:1.1} is regular away from a relatively closed singular set $\mathrm{Sing}(u)$ satisfying
\[
\mathcal{H}^{\,n+2-3\alpha}(\mathrm{Sing}(u))=0.
\]
The lower bound $\alpha=\frac{n}{3}$ ensures the Sobolev embedding into $L^3$, which is necessary to control the nonlinear term. The upper bound $\alpha=\frac{n+2}{3}$ is the value at which the Sobolev embedding exponent coincides with the scaling-invariant exponent, corresponding to the scaling-invariant space $L^{\frac{n}{\alpha-1}}(\mathbb{R}^n)$ of \eqref{eq:1.1}. At this value of $\alpha$, one has
\[
\frac{2n}{n-\alpha} = \frac{n}{\alpha-1} = \frac{3n}{n-1}.
\]

This coincidence suggests that Liouville-type problem may naturally arise with the threshold $\alpha=\frac{n+2}{3}$. In partial regularity theory, one aims to obtain smallness of scaling-invariant quantities on sufficiently small balls under the natural energy assumption. In contrast, in Liouville-type problems, one exploits the scaling structure of the equation to control the large-scale asymptotic behavior of solutions under the natural energy assumption. Thus the same threshold appears, although the direction of the argument is reversed.

In the three-dimensional case, such scaling-critical Liouville-type results have been established in recent works. Yang \cite{40} proved the Liouville-type result under the assumption $u\in L^{\frac{9}{2}}(\mathbb{R}^3)$ for $\frac{5}{3}\le \alpha <2$, which covers the endpoint $\alpha=\frac{5}{3}$. Later, Zeng \cite{42} proved that any smooth solution $u\in \dot H^{\frac{\alpha}{2}}(\mathbb{R}^3)$ is trivial for $1\le \alpha \le \frac{5}{3}$ as a corollary of Liouville-type results for the fractional MHD system under anisotropic Lebesgue spaces. See also \cite{15,25,34,38,39} for related results in $\mathbb{R}^3$.

Building on these results, we establish scaling-critical Liouville-type results in dimensions $3\le n \le 6$ for $\frac{n}{3} \le \alpha < \frac{n+2}{3}$. To handle the nonlocal operator $(-\Delta)^{\frac{\alpha}{2}}$ in a localized setting, most existing works are based on the Caffarelli–Silvestre extension \cite{2}. In contrast, our approach relies on direct kernel estimates, which allows us to replace the Sobolev space assumption to a large-scale Morrey-type condition. In addition, we exploit the structure of the equation to establish a bootstrap argument that propagates integrability from near the scaling-invariant exponent down to lower exponents, including the Sobolev embedding exponent. Since the bootstrap method is derived from the structure of the equation and is carried out by using the Hardy--Littlewood--Sobolev inequality \cite{32}, it naturally yields scaling-critical results, but does not cover the endpoint case $\alpha=\frac{n+2}{3}$. Our method also extends to the hyper-dissipative regime $\alpha>2$, thereby covering the dimensions $n=5$ and $n=6$, for which the scaling-critical threshold $\frac{n+2}{3}$ exceeds 2.

Using the same notation $A(R):=B_{2R}\setminus B_R$ for $R>1$ from \eqref{eq:1.3}, we first state our main result for $1\le \alpha < 2$ in dimensions $n=3,4,5$.

\begin{theorem} \label{thm:1}
Let $n=3,4,5$, and let $u$ be a smooth solution of \eqref{eq:1.1}. Assume that for some $0 \le \lambda < \frac{\alpha}{2}$, 
\[
\limsup_{r\to\infty} r^{-\lambda}
\left(\int_{A(r)} \left|(-\Delta)^{\frac{\alpha}{4}}u\right|^2\,dx \right)^{\frac{1}{2}}<+\infty.
\]
Suppose further that one of the following conditions holds:
\[
\begin{aligned}
\text{(i)} \quad & \alpha = 1, 
&& u \in L^q(\mathbb{R}^n), 
&& 3 \le q < +\infty, \\
\text{(ii)} \quad & 1 < \alpha < \min\!\left\{\frac{n+2}{3},\,2\right\}, 
&& u \in L^q(\mathbb{R}^n), 
&& 3 \le q < \frac{n}{\alpha-1}.
\end{aligned}
\]
Then $u \equiv 0$.
\end{theorem}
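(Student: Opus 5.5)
We follow the classical Caccioppoli strategy, but use a direct kernel representation of the nonlocal operator in place of the Caffarelli--Silvestre extension. Fix a cutoff $\phi_R(x)=\phi(x/R)$ with $\phi\equiv1$ on $B_1$ and $\operatorname{supp}\phi\subset B_2$. Testing \eqref{eq:1.1} against $\phi_R u$ and writing $(-\Delta)^{\frac{\alpha}{2}}=(-\Delta)^{\frac{\alpha}{4}}(-\Delta)^{\frac{\alpha}{4}}$ gives
\[
\int \phi_R\,\bigl|(-\Delta)^{\frac{\alpha}{4}}u\bigr|^2\,dx
= -\int (-\Delta)^{\frac{\alpha}{4}}u\cdot[(-\Delta)^{\frac{\alpha}{4}},\phi_R]u\,dx
+\int \Bigl(\tfrac12|u|^2+p\Bigr)\,u\cdot\nabla\phi_R\,dx .
\]
The pressure is recovered as $p=R_iR_j(u_iu_j)$, so that $\|p\|_{L^{q/2}}\lesssim\|u\|_{L^q}^2$. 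The nonlinear term is then bounded by Hölder's inequality on $A(R)$:
\[
CR^{\,n-1-\frac{3n}{q}}\,\|u\|_{L^q(A(R))}\,\|u\|_{L^q}^2 .
\]
This tends to zero as $R\to\infty$ when $q=\frac{3n}{n-1}$, because $\|u\|_{L^q(A(R))}\to0$. For smaller $q$ the power of $R$ is negative outright, and the estimate holds as long as $q\ge3$.

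The commutator term is the core of the argument. We write the commutator as the singular integral
\[
[(-\Delta)^{\frac{\alpha}{4}},\phi_R]u(x)=c\int\frac{(\phi_R(x)-\phi_R(y))\,u(y)}{|x-y|^{n+\alpha/2}}\,dy
\]
(valid since $\alpha/2<1$) and split the $y$-integration into three regions:
\begin{enumerate}[label=(\alph*)]
\item the near region $|y|\le 4R$, where $|\phi_R(x)-\phi_R(y)|\lesssim|x-y|/R$;
\item the far region, decomposed dyadically into annuli $A(2^kR)$, where the kernel is bounded by $(2^kR)^{-n-\alpha/2}$;
\item points $x$ far outside $B_{2R}$, where $\phi_R(x)=0$.
\end{enumerate}
This should give
\[
\bigl\|[(-\Delta)^{\frac{\alpha}{4}},\phi_R]u\bigr\|_{L^2(B_{2^kR})}
\lesssim R^{\frac{n}{2}-\frac{\alpha}{2}-\frac{n}{q}}\sum_k 2^{-k\theta}\,\|u\|_{L^q(A(2^kR))}
\]
for some $\theta>0$, together with a similar bound pairing with the $L^2$ norms of $(-\Delta)^{\frac{\alpha}{4}}u$ on the dyadic annuli. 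Here the Morrey hypothesis enters. It gives $\|(-\Delta)^{\frac{\alpha}{4}}u\|_{L^2(A(2^kR))}\lesssim(2^kR)^{\lambda}$, and the condition $\lambda<\alpha/2$ is exactly what makes the dyadic sum converge. The resulting contribution is
\[
R^{\lambda+\frac{n}{2}-\frac{\alpha}{2}-\frac{n}{q}}\,\|u\|_{L^q},
\]
which, together with the local piece, is small as $R\to\infty$ provided $q$ is at or below an admissible exponent: on $B_R$ itself we may absorb via Young's inequality.

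The main obstacle is that these exponents match only in a narrow window of $q$, essentially $q$ near the Sobolev exponent $\frac{2n}{n-\alpha}$ and $q\le\frac{3n}{n-1}$. The hypothesis, however, places $u$ in some $L^q$ with $3\le q<\frac{n}{\alpha-1}$. We therefore need a bootstrap that moves integrability downward.

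\textbf{Step 1 (rewriting the equation).} We write $u=(-\Delta)^{-\frac{\alpha}{2}}\mathbb{P}\nabla\cdot(u\otimes u)$. By the Hardy--Littlewood--Sobolev inequality,
\[
u\in L^q\ \Longrightarrow\ u\in L^{s},\qquad \frac1s=\frac2q-\frac{\alpha-1}{n}.
\]
For $q<\frac{n}{\alpha-1}$ we have $s<q$. Since $u\to0$ at infinity, $u\in L^q$ already gives $u\in L^r$ for all $r\ge q$, so interpolation fills the gap between $s$ and $q$.

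\textbf{Step 2 (iteration).} Iterating Step 1 decreases the exponent geometrically until it reaches $\max\{3,\frac{2n}{n-\alpha}\}$ or the HLS lower restriction. In the case $\alpha=1$ the map is $\frac1s=\frac2q$, so the exponent halves at each step, and the hypothesis $3\le q<\infty$ is enough.

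\textbf{Step 3 (conclusion).} Once $u$ lies in an $L^q$ space with $q$ in the admissible window, both terms on the right-hand side of the energy identity tend to zero. Hence $(-\Delta)^{\frac{\alpha}{4}}u=0$, and the decay of $u$ forces $u\equiv0$.

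The step we expect to need the most care is the commutator tail estimate, in particular checking that the dyadic sum converges with only the Morrey growth $\lambda<\alpha/2$ available.
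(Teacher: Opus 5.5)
\textbf{Gap: the bootstrap stops too early.} Your architecture matches the paper's: energy identity with a cutoff, kernel form of the commutator, dyadic tail paired with the Morrey-type bound, and an HLS bootstrap from $u=-(-\Delta)^{-\frac{\alpha}{2}}\mathbb P\nabla\cdot(u\otimes u)$. However, the bootstrap is aimed at the wrong target, and the argument does not close as written.

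Your own tail computation gives a factor $R^{\lambda+\frac n2-\frac\alpha2-\frac nq}\|u\|_{L^q}$. This decays only for $q<\frac{2n}{n-\alpha+2\lambda}$, and that threshold behaves badly:
\begin{itemize}
\item it is strictly below the Sobolev exponent $\frac{2n}{n-\alpha}$ as soon as $\lambda>0$;
\item it tends to $2$ as $\lambda\to\frac\alpha2$;
\item it is $\le 3$ for every $\lambda$ when $\alpha=1$, and below $3$ whenever $3\alpha<n$ (e.g.\ $n=4,5$ with $\alpha$ near $1$, which the theorem allows).
\end{itemize}
The cubic and pressure terms, by contrast, need an exponent in $[3,\frac{3n}{n-1}]$. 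So in general there is no single exponent in an ``admissible window'' that kills both terms.

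Targeting $\max\{3,\frac{2n}{n-\alpha}\}$ fails in every case. At $q=\frac{2n}{n-\alpha}$ the factor is exactly $R^{\lambda}$. At $q=3>\frac{2n}{n-\alpha}$ it is a growing power $R^{\lambda+\frac{n-3\alpha}{6}}$. Nor can smallness of $\|u\|_{L^q(A(R))}$ rescue this: the tail involves $u$ on the whole ball, since in the paper's form it reads $R^{-\frac\alpha2+\lambda}\|u\|_{L^2(B_{2R})}$.

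\textbf{What is needed.} Run the iteration all the way through $L^2$:
\begin{itemize}
\item Keep applying the HLS step while the input exponent exceeds $2$, so that $u\otimes u\in L^{r/2}$ with $r/2>1$.
\item Check that the first iterate dropping to $\le 2$ is still $>1$. The paper does this via monotonicity of the M\"obius map $F(x)=\frac{nx}{2n-(\alpha-1)x}$ and $F(2)=\frac{n}{n-\alpha+1}>1$; for $\alpha=1$ the halving handles it.
\item Interpolate with $L^\infty$, which follows from smoothness and decay, to get $u\in L^2\cap L^3$.
\end{itemize}
Then $L^2$ controls the local commutator term $R^{-\alpha}\|u\|_{L^2}^2$ and the tail $R^{-\frac\alpha2+\lambda}\|u\|_{L^2}$ for every $\lambda<\frac\alpha2$. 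Separately, $L^3$ handles the nonlinear term. These are two different exponents obtained by interpolation, not one common window.

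\textbf{Smaller point: absorption.} Young's inequality alone does not absorb the local commutator piece. It pairs with $\|(-\Delta)^{\frac\alpha4}u\|_{L^2(B_{4R})}$, while your left side only controls $B_R$. You need either:
\begin{itemize}
\item the iteration lemma applied to $f(r)=\int_{B_r}|(-\Delta)^{\frac\alpha4}u|^2$, as the paper does; or
\item the bound $\|(-\Delta)^{\frac\alpha4}u\|_{L^2(B_{4R})}\lesssim R^{\lambda'}$ for some $\lambda'\in(\lambda,\frac\alpha2)$, obtained by summing the Morrey hypothesis over dyadic annuli. This makes the local piece of the same form as the tail.
\end{itemize}
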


We next state our result for the hyper-dissipative case in dimensions $n=5,6$.

\begin{theorem} \label{thm:2}
Let $n=5,6$, and let $u$ be a smooth solution of \eqref{eq:1.1}. Assume that for some $0 \le \lambda < \frac{\alpha}{2}$,
\[
\limsup_{r\to\infty} r^{-\lambda}
\left(\int_{A(r)} \left|(-\Delta)^{\frac{\alpha}{4}} u\right|^2 \, dx \right)^\frac{1}{2}
<+\infty.
\]
Suppose further that for $2<\alpha<\frac{n+2}{3}$,
\[
\nabla u \in L^{s}(\mathbb{R}^n), 
\quad 
2\le s < \frac{n}{\alpha}.
\]
Then $u\equiv0$.
\end{theorem}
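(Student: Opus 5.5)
The plan mirrors the proof of Theorem~\ref{thm:1}: a localized energy identity, a kernel-based commutator estimate for $(-\Delta)^{\frac{\alpha}{2}}$ that also works for $2<\alpha<4$, and a bootstrap in the integrability of $u$.

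\textbf{Step 1: integrability of $u$.} Since $\nabla u\in L^s$ with $2\le s<\frac{n}{\alpha}<n$, and $u\to 0$ at infinity, Sobolev embedding gives $u\in L^{s^*}(\mathbb{R}^n)$ with
\[
\frac{1}{s^*}=\frac{1}{s}-\frac{1}{n}, \qquad \frac{2n}{n-2}\le s^* < \frac{n}{\alpha-1}.
\]
For $n=5,6$ and $2<\alpha<\frac{n+2}{3}$ we have $\frac{2n}{n-2}\ge 3$, and $\frac{n}{\alpha-1}$ is the scaling-invariant exponent. So $u\in L^{q}$ for some $3\le q<\frac{n}{\alpha-1}$, which is the analogue of hypothesis (ii) in Theorem~\ref{thm:1}.

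\textbf{Step 2: bootstrap downward in $q$.} We then run the paper's bootstrap, which I expect to be the main obstacle. Write $u=(-\Delta)^{-\frac{\alpha}{2}}\mathbb{P}\nabla\cdot(u\otimes u)$. By Hardy--Littlewood--Sobolev, together with Calder\'on--Zygmund bounds for the pressure $p=\mathcal{R}_i\mathcal{R}_j(u_iu_j)$,
\[
u\in L^q \ \Longrightarrow\ u\in L^{r},\qquad \frac{1}{r}=\frac{2}{q}-\frac{\alpha-1}{n}.
\]
Since $q<\frac{n}{\alpha-1}$, we get $r<q$, and the gap $\frac1r-\frac1q=\frac1q-\frac{\alpha-1}{n}$ grows at each iteration. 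Iterating lowers the exponent to $3$, or to any target between $3$ and $q$ by interpolation. Two points need care:
\begin{itemize}[label=--]
\item identifying the smooth solution with the potential representation, using the decay of $u$ at infinity;
\item checking that HLS applies, i.e.\ $\frac{2}{q}<1$ and the exponents stay in range.
\end{itemize}
In the hyper-dissipative range, $\nabla u\in L^s$ can also be fed through the gradient version of this representation to handle the regime where $r$ would drop below the HLS range.

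\textbf{Step 3: localized energy inequality.} Take a cutoff $\phi_R$ supported in $B_{2R}$ with $\phi_R\equiv1$ on $B_R$, and test the equation with $u\phi_R$:
\[
\int (-\Delta)^{\frac{\alpha}{4}}u\cdot(-\Delta)^{\frac{\alpha}{4}}(u\phi_R)\,dx=\int\Big(\frac{|u|^2}{2}+p\Big)u\cdot\nabla\phi_R\,dx .
\]
Write $(-\Delta)^{\frac{\alpha}{4}}(u\phi_R)=\phi_R(-\Delta)^{\frac{\alpha}{4}}u+[(-\Delta)^{\frac{\alpha}{4}},\phi_R]u$. Since $\frac{\alpha}{2}\in(1,2)$, the commutator should be estimated via the singular integral representation of order $\frac{\alpha}{2}<2$ using second differences. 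Split the integral into a near region $|x-y|<R$, where a Taylor expansion of $\phi_R$ gives factors $R^{-1}$ and $R^{-2}$, and a tail decomposed dyadically over the annuli $A(2^kR)$. This yields
\[
\big\|[(-\Delta)^{\frac{\alpha}{4}},\phi_R]u\big\|_{L^2(B_{2R})}\lesssim \sum_{k\ge0}2^{-k\theta}R^{-\frac{\alpha}{2}+\frac n2-\frac nq}\,\|u\|_{L^q(B_{2^{k+1}R})}
\]
for some $\theta>0$. Off the support of $\phi_R$ we use the large-scale Morrey bound
\[
\|(-\Delta)^{\frac{\alpha}{4}}u\|_{L^2(A(r))}\lesssim r^{\lambda},\qquad \lambda<\frac{\alpha}{2},
\]
together with dyadic summation against the kernel decay $|x-y|^{-n-\frac{\alpha}{2}}$. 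The condition $\lambda<\frac{\alpha}{2}$ is exactly what makes this tail series converge and tend to zero as $R\to\infty$.

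\textbf{Step 4: conclusion.} The nonlinear term is bounded by
\[
CR^{\,n-1-\frac{3n}{q}}\,\|u\|_{L^q(A(R))}\,\|u\|_{L^q}^2 ,
\]
which tends to zero when $q=3$, and is at least bounded for $3\le q\le\frac{3n}{n-1}$ because $\|u\|_{L^q(A(R))}\to 0$. The linear commutator terms carry the power $R^{\,n-\alpha-\frac{2n}{q}}$, which has a nonpositive exponent for $q\le\frac{2n}{n-\alpha}$. The bootstrap of Step 2 lets us choose $q$ in both windows, e.g.\ $q=3$. Since $\alpha<\frac{n+2}{3}$, we have $\frac{2n}{n-\alpha}>3$ for $n\le6$, so $q=3$ lies in both windows and gives strict decay. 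Letting $R\to\infty$ yields $\|(-\Delta)^{\frac{\alpha}{4}}u\|_{L^2}=0$, hence $u$ is constant and therefore $u\equiv0$ by the decay condition.
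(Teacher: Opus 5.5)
\textbf{Gap in Step 3: the commutator needs $\nabla u$.} For $2<\alpha<4$, the commutator $[(-\Delta)^{\frac{\alpha}{4}},\phi_R]u$ cannot be bounded by $L^q$ norms of $u$ alone. Your Steps 1 and 2 for $u$ itself match the paper; the problem is the commutator. With $\gamma=\frac{\alpha}{2}\in(1,2)$, the Lipschitz bound $|\phi_R(x)-\phi_R(y)|\le CR^{-1}|x-y|$ leaves the kernel $R^{-1}|x-y|^{1-n-\gamma}$, which is not integrable at the diagonal. So the linear Taylor term $\nabla\phi_R(x)\cdot(x-y)$ must be split off.

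The quadratic remainder is indeed bounded by $\|u\|_{L^p}$; this is the paper's $T_1$. The linear piece $\nabla\phi_R(x)\cdot\mathrm{P.V.}\int_{|x-y|\le 2R}\frac{(x-y)\,u(y)}{|x-y|^{n+\gamma}}\,dy$ converges only after replacing $u(y)$ by $u(y)-u(x)$. It is essentially $\nabla\phi_R\cdot\nabla(-\Delta)^{\frac{\gamma-2}{2}}u$, an operator of positive order $\gamma-1$ acting on $u$. Your second-difference version has the same defect: the $R^{-1}$ term carries $u(x+y)-u(x-y)$, which is small only if $u$ is differentiable.

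As a kernel estimate your bound is false. Take $u=\psi\cos(Nx_1)$ with $\psi$ supported where $\nabla\phi_R\neq0$: the commutator grows like $N^{\gamma-1}$, while $\|u\|_{L^q}$ stays bounded. This is exactly why Theorem~\ref{thm:2} assumes $\nabla u\in L^s$ rather than $u\in L^q$. The paper treats this piece ($T_2$) with the mean value theorem. That adds $CR^{\,n+2-\alpha-\frac{2n}{q}}\|\nabla u\|_{L^q}^2$ to the Caccioppoli inequality, and this term vanishes only for $2\le q<\frac{2n}{n+2-\alpha}$.

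\textbf{Consequence for the bootstrap.} Your bootstrap must also lower the integrability of $\nabla u$, which Step 2 never does. Since $\alpha<\frac{n+2}{3}$ gives $\frac{n}{\alpha}>\frac{2n}{n+2-\alpha}$, the assumed $s$ need not lie in that window. The paper uses $\nabla u=\nabla(-\Delta)^{-\frac{\alpha}{2}}\mathbb P\nabla\cdot(u\otimes u)$ and HLS to get $\|\nabla u\|_{L^t}\le C\|u\|_{L^r}^2$ with $\frac1t=\frac2r-\frac{\alpha-2}{n}$. It iterates until $\nabla u\in L^t$ for some $1<t\le 2$, then interpolates with $L^s$ to obtain $\nabla u\in L^2$, which lies in the admissible window.

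\textbf{Smaller issue in the Morrey tail.} You pair $\|(-\Delta)^{\frac{\alpha}{4}}u\|_{L^2(A(r))}\lesssim r^\lambda$ with $u\in L^3$. This only gives $R^{\frac n6-\frac\alpha2+\lambda}\|u\|_{L^3}$, which does not vanish for every $\lambda<\frac{\alpha}{2}$. You must push the bootstrap of $u$ down to $L^2$. That is possible because the iteration map $r\mapsto\frac{nr}{2n-(\alpha-1)r}$ sends $2$ to $\frac{n}{n-\alpha+1}>1$. You then get the factor $R^{-\frac{\alpha}{2}+\lambda}\|u\|_{L^2(B_{2R})}$, as in the paper.
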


\begin{remark}
In Propositions~\ref{prop:3} and~\ref{prop:4}, the large-scale Morrey-type condition yields the decay factor
$R^{-\frac{\alpha}{2}+\lambda}\|u\|_{L^2(B_{2R})}$ in the far-field commutator estimate. Combined with the bootstrap $L^2$-control of $u$, the tail contribution vanishes as $R\to\infty$ whenever $0\le\lambda<\frac{\alpha}{2}$. Moreover, in part of the admissible range of $\alpha$, $\lambda$ can be chosen at the scaling-critical value. Under the scaling $u_\ell(x)=\ell^{\alpha-1}u(\ell x)$, one has
\[
\limsup_{r\to\infty} r^{-\lambda} 
\left\|(-\Delta)^{\frac{\alpha}{4}}u_\ell\right\|_{L^2(A(r))} 
=
\ell^{\lambda-\lambda_c} \limsup_{r\to\infty} (\ell  r)^{-\lambda}
\left\|(-\Delta)^{\frac{\alpha}{4}}u\right\|_{L^2(A(\ell r))},
\]
where $\lambda_c = \frac{n+2-3\alpha}{2}$. If $\frac{n+2}{4}<\alpha<\frac{n+2}{3}$, then $\lambda_c<\frac{\alpha}{2}$, so the scaling-critical choice
$\lambda=\lambda_c$ is admissible.
\end{remark}

We briefly recall that for $1 \le p \le q < +\infty$, the homogeneous Morrey space $\dot M^{p,q}(\mathbb{R}^n)$ consists of all functions $f \in L^p_{\mathrm{loc}}(\mathbb{R}^n)$ such that
\[
\|f\|_{\dot M^{p,q}(\mathbb{R}^n)}
:= \sup_{x_0 \in \mathbb{R}^n,\; r>0}
r^{\frac{n}{q}-\frac{n}{p}}
\left(\int_{B(x_0,r)} |f(x)|^p \, dx\right)^{\frac{1}{p}} < +\infty.
\]
In terms of homogeneous Morrey spaces, our Morrey-type assumption is satisfied when $(-\Delta)^{\frac{\alpha}{4}}u \in \dot M^{2,\frac{2n}{n-2\lambda}}(\mathbb{R}^n)$. In particular, for $\lambda=0$, this reduces to the finite fractional energy case $(-\Delta)^{\frac{\alpha}{4}}u\in L^2(\mathbb R^n)$, that is, $u\in \dot H^{\frac{\alpha}{2}}(\mathbb R^n)$.

In Theorem~\ref{thm:1}, by the Sobolev embedding $\dot H^{\frac{\alpha}{2}}(\mathbb{R}^n)\hookrightarrow L^{\frac{2n}{n-\alpha}}(\mathbb{R}^n)$, we observe that
\[
\frac{2n}{n-\alpha}
\in
\left[3,\frac{n}{\alpha-1}\right), \quad n=3,4,5, \quad \frac{n}{3} \le \alpha < \frac{n+2}{3}, 
\]
and hence the Sobolev embedding exponent belongs to the admissible range of $q$.

In Theorem~\ref{thm:2}, using the Sobolev embedding $\dot H^{\frac{\alpha}{2}}(\mathbb{R}^n)\hookrightarrow \dot W^{1,\frac{2n}{n+2-\alpha}}(\mathbb{R}^n)$, we similarly obtain
\[
\frac{2n}{n+2-\alpha} \in \left[2,\frac{n}{\alpha}\right), \quad n=5,6, \quad 2 < \alpha < \frac{n+2}{3}.
\]
Moreover, in the classical case $\alpha=2$, it is known that $u \in \dot H^{1}(\mathbb{R}^n)$ implies $u\equiv0$ for $n\ge4$. Combining these results, we obtain the following corollary.

\begin{corollary} \label{cor:3}
Let $3\le n \le 6$. Any smooth solution $u$ of \eqref{eq:1.1} satisfying $u \in \dot H^{\frac{\alpha}{2}}(\mathbb{R}^n)$ must be trivial for $\frac{n}{3}\le\alpha<\frac{n+2}{3}$.    
\end{corollary}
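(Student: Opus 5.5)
The corollary follows by checking that $u\in\dot H^{\frac{\alpha}{2}}(\mathbb{R}^n)$ supplies every hypothesis of Theorem~\ref{thm:1} or Theorem~\ref{thm:2}, according to the range of $\alpha$. Two hypotheses need checking in each case: the large-scale Morrey bound and the integrability condition.

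\textbf{Morrey bound.} Take $\lambda=0$. Since $(-\Delta)^{\frac{\alpha}{4}}u\in L^2(\mathbb{R}^n)$, we have
\[
\limsup_{r\to\infty}\|(-\Delta)^{\frac{\alpha}{4}}u\|_{L^2(A(r))}\le \|u\|_{\dot H^{\frac{\alpha}{2}}}<+\infty,
\]
and $0<\frac{\alpha}{2}$. So this hypothesis holds in every case.

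\textbf{Case split by $n$ and $\alpha$.} We work in $\frac{n}{3}\le\alpha<\frac{n+2}{3}$.

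For $n=3$ the range is $1\le\alpha<\frac53$. For $n=4$ it is $\frac43\le\alpha<2$. Both lie inside $[1,2)$ and are covered by Theorem~\ref{thm:1}.

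For $n=5$ the range is $\frac53\le\alpha<\frac73$, which splits into three pieces:
\begin{itemize}
\item $\frac53\le\alpha<2$ is handled by Theorem~\ref{thm:1}(ii), since $\min\{\frac{n+2}{3},2\}=2$.
\item $\alpha=2$ is the classical case, where $u\in\dot H^1(\mathbb{R}^5)$ forces $u\equiv 0$, as recalled before the corollary.
\item $2<\alpha<\frac73$ is handled by Theorem~\ref{thm:2}.
\end{itemize}

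For $n=6$ the range is $2\le\alpha<\frac83$. The case $\alpha=2$ is again classical, and $2<\alpha<\frac83$ is handled by Theorem~\ref{thm:2}.

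\textbf{Integrability for Theorem~\ref{thm:1}.} Sobolev embedding gives $u\in L^{q}$ with $q=\frac{2n}{n-\alpha}$. We need $3\le q$ and, when $\alpha>1$, $q<\frac{n}{\alpha-1}$.
\begin{itemize}
\item The inequality $q\ge 3$ is equivalent to $2n\ge 3n-3\alpha$, i.e.\ $\alpha\ge\frac n3$.
\item The inequality $\frac{2n}{n-\alpha}<\frac{n}{\alpha-1}$ is equivalent to $2\alpha-2<n-\alpha$, i.e.\ $\alpha<\frac{n+2}{3}$.
\end{itemize}
When $\alpha=1$ (only possible for $n=3$), we have $q=3$, which lies in $[3,\infty)$, so condition (i) holds.

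\textbf{Integrability for Theorem~\ref{thm:2}.} Sobolev embedding gives $\nabla u\in L^{s}$ with $s=\frac{2n}{n+2-\alpha}$, since $\nabla u$ has regularity $\frac{\alpha}{2}-1$.
\begin{itemize}
\item The inequality $s\ge2$ is equivalent to $\alpha\ge2$.
\item The inequality $s<\frac n\alpha$ is equivalent to $2\alpha<n+2-\alpha$, i.e.\ $\alpha<\frac{n+2}{3}$.
\end{itemize}

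\textbf{Remaining hypotheses and the delicate point.} The decay condition at infinity is implicit in the notion of solution to \eqref{eq:1.1}, and it is consistent with $\dot H^{\frac{\alpha}{2}}$ membership.

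The one point needing care is the embedding $\dot H^{\frac{\alpha}{2}}\hookrightarrow\dot W^{1,s}$ used for Theorem~\ref{thm:2}. It requires $\nabla u=\nabla(-\Delta)^{-\frac{\alpha}{4}}\,(-\Delta)^{\frac{\alpha}{4}}u$, i.e.\ that $u$ is recovered from $(-\Delta)^{\frac{\alpha}{4}}u$ without polynomial ambiguity. Given that, $\nabla(-\Delta)^{-\frac{\alpha}{4}}$ is a Riesz transform composed with the Riesz potential $I_{\frac{\alpha}{2}-1}$, and the Hardy--Littlewood--Sobolev inequality applies with $0<\frac{\alpha}{2}-1<\frac n2$. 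The absence of polynomial ambiguity follows from the decay of $u$, since $u\to0$ at infinity excludes nonzero polynomials. Hence $\nabla u\in L^{s}$.
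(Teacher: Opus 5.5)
Your proposal is correct and follows the paper's own argument: you take $\lambda=0$ in the Morrey-type condition, use the embeddings $\dot H^{\frac{\alpha}{2}}\hookrightarrow L^{\frac{2n}{n-\alpha}}$ and $\dot H^{\frac{\alpha}{2}}\hookrightarrow \dot W^{1,\frac{2n}{n+2-\alpha}}$ to land in the admissible ranges of Theorems~\ref{thm:1} and~\ref{thm:2}, and treat $\alpha=2$ (for $n=5,6$) with the classical $\dot H^1$ result for $n\ge 4$. Your explicit case split by $n$ and your remark that the decay of $u$ rules out polynomial ambiguity in the embeddings spell out details the paper leaves implicit.
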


\begin{remark}
Direct kernel estimates allow us to avoid extension formulations that depend on the range of $\alpha$. In particular, for $\alpha>2$, the standard Caffarelli--Silvestre extension is no longer directly applicable, and a higher-order extension is required; see \cite{41}. The direct approach therefore allows the fractional and hyper-dissipative cases to be treated in a similar manner. The range $0<\alpha<\frac {n}{3}$ is simpler from the viewpoint of integrability, where the Sobolev embedding exponent satisfies $\frac{2n}{n-\alpha}<3$. Since $u$ is smooth and satisfies the uniform decay condition, we have $u\in L^\infty(\mathbb R^n)$. Hence, by interpolation, we obtain the integrability needed to control the nonlinear term. At the endpoint, however, the bootstrap method no longer provides the integrability gain needed to close the argument, and a different approach appears to be required.
\end{remark}

\begin{remark}
In two dimensions, if the vorticity satisfies the uniform decay condition, then Liouville-type theorems are expected to hold independently of $\alpha$, by the same maximum principle argument as in the classical case \cite{22}. In six dimensions, although the corresponding partial regularity results are known for $2 \le \alpha < \frac{7}{3}$ in \cite{16,20}, the range $2 \le \alpha < \frac{8}{3}$ remains consistent with the scaling of the equation and agrees with our results. For higher dimensions $n \ge 7$, since $\frac{n}{3} > 2$, the admissible range of $\alpha$ is shifted away from the classical case $\alpha=2$ and requires increasingly strong dissipation as $n$ grows. Although our argument may extend up to dimension $n=10$, where the upper endpoint is $\alpha<\frac{n+2}{3}=4$, we restrict our analysis to $3 \le n \le 6$ to remain in a regime closely related to the classical case.
\end{remark}

The paper is organized as follows. In Section~2, we introduce homogeneous Sobolev spaces, the Riesz potential, and auxiliary lemmas used in the proof. In Section~3, we prove Theorems~\ref{thm:1} and~\ref{thm:2}. The key step is to establish direct kernel estimates for the fractional commutator terms, which are obtained in Propositions~\ref{prop:1} and~\ref{prop:3}. Analogous estimates for the hyper-dissipative case are derived in Propositions~\ref{prop:2} and~\ref{prop:4} by a similar argument. Using these results, we derive a Caccioppoli-type inequality in each case. Together with the bootstrap argument based on Lemma~\ref{lem:4}, this yields the required integrability condition and completes the proof. In the appendix, we extend our approach to the coupled MHD system in $\mathbb{R}^3$ and present the corresponding results.

\section{Preliminaries}
Let $\mathcal{S}$ denote the Schwartz space and $\mathcal{S}'$ its dual space. For $1<p<+\infty$ and $s>0$, the homogeneous Sobolev space $\dot W^{s,p}(\mathbb{R}^n)$ is defined by
\[
\dot W^{s,p}(\mathbb{R}^n)
:= \left\{ f \in \mathcal{S}'(\mathbb{R}^n) \;:\;
(-\Delta)^{\frac{s}{2}} f \in L^p(\mathbb{R}^n) \right\},
\]
equipped with the semi-norm
\[
\|f\|_{\dot W^{s,p}(\mathbb{R}^n)}
:= \|(-\Delta)^{\frac{s}{2}} f\|_{L^p(\mathbb{R}^n)}.
\]
In particular, when $p=2$, we write
\[
\dot W^{s,2}(\mathbb{R}^n)=\dot H^s(\mathbb{R}^n).
\]

For $0<\alpha<n$, we also define the Riesz potential $I_\alpha$ by
\[
(I_\alpha f)(x)=C(n,-\alpha)\int_{\mathbb{R}^n}\frac{f(y)}{|x-y|^{n-\alpha}}\,dy,
\]
where $C(n,s)$ denotes the normalizing constant given by
\[
C(n,s)
:=
\frac{2^s\Gamma\!\left(\frac{n+s}{2}\right)}
{\pi^{\frac n2}\left|\Gamma\!\left(-\frac{s}{2}\right)\right|},
\quad -n<s<2,\quad s \neq 0.
\]
Here, the Riesz potential is identified with the inverse fractional Laplacian $(-\Delta)^{-\frac{\alpha}{2}}$.

We recall the following integral representation of the fractional Laplacian.

\begin{lemma} [\cite{19}, Lemma~3.2] \label{lem:1}
Let $\alpha\in(0,2)$ and let $(-\Delta)^{\frac{\alpha}{2}}$ be the fractional Laplacian operator. Then, for any $u\in\mathcal{S}$,
\begin{equation*}
\begin{aligned}
(-\Delta)^{\frac{\alpha}{2}} u(x)
&=C(n,\alpha)\mathrm{P.V.}\int_{\mathbb{R}^n}
\frac{u(x)-u(y)}{|x-y|^{n+\alpha}}\,dy \\
&=-\frac{1}{2}C(n,\alpha)\mathrm{P.V.}\int_{\mathbb{R}^n}
\frac{u(x+y)+u(x-y)-2u(x)}{|y|^{n+\alpha}}\,dy,
\quad \forall\, x\in\mathbb{R}^n.    
\end{aligned}
\end{equation*}
\end{lemma}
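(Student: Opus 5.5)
This is a cited classical fact, so I would prove it directly via the Fourier transform for $u\in\mathcal S$ with $0<\alpha<2$. First I show that both integrals make sense.

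For the first expression, write $u(x)-u(y)$ with $y=x+z$. Near $z=0$ the principal value cancels the linear term $\nabla u(x)\cdot z$ by odd symmetry, leaving an $O(|z|^2)$ remainder against $|z|^{-n-\alpha}$. This is integrable because $\alpha<2$. At infinity, $u$ is bounded and $|z|^{-n-\alpha}$ is integrable since $\alpha>0$.

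For the second expression, the second difference $u(x+y)+u(x-y)-2u(x)$ is bounded by $\|D^2u\|_\infty |y|^2$ near the origin and by $4\|u\|_\infty$ at infinity. So this integral converges absolutely, without any principal value.

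The equality of the two forms follows by splitting the P.V. integral over $\{|y|>\varepsilon\}$ and substituting $y\mapsto -y$ in one copy. Averaging the two copies gives $-\frac12\int_{|y|>\varepsilon}\frac{u(x+y)+u(x-y)-2u(x)}{|y|^{n+\alpha}}dy$. Letting $\varepsilon\to0$ by dominated convergence finishes this step.

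The main step is identifying the symbol, which is where the constant $C(n,\alpha)$ enters. Let $L u(x)$ denote the absolutely convergent second-difference integral. By Fubini, which is justified since the integrand is dominated by $\min(|y|^2,1)|y|^{-n-\alpha}$ times Schwartz bounds, I get
\[
\widehat{Lu}(\xi)=-\frac12\int_{\mathbb R^n}\frac{e^{iy\cdot\xi}+e^{-iy\cdot\xi}-2}{|y|^{n+\alpha}}dy\,\hat u(\xi)=\int_{\mathbb R^n}\frac{1-\cos(y\cdot\xi)}{|y|^{n+\alpha}}dy\,\hat u(\xi).
\]
The multiplier $m(\xi)=\int\frac{1-\cos(y\cdot\xi)}{|y|^{n+\alpha}}dy$ is finite, rotation invariant, and homogeneous of degree $\alpha$, as seen by the substitutions $y\mapsto Ry$ and $y\mapsto y/|\xi|$. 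Hence $m(\xi)=m(e_1)|\xi|^\alpha$.

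It remains to check that $C(n,\alpha)\,m(e_1)=1$, i.e. to compute $m(e_1)$ explicitly. I expect this to be the main obstacle. My plan is to use the Gaussian subordination identity
\[
|y|^{-n-\alpha}=\frac{1}{\Gamma(\frac{n+\alpha}{2})}\int_0^\infty t^{\frac{n+\alpha}{2}-1}e^{-t|y|^2}dt.
\]
Then I use $\int(1-\cos y_1)e^{-t|y|^2}dy=(\pi/t)^{n/2}(1-e^{-1/(4t)})$. Applying Tonelli, which is valid since the integrand is nonnegative, I reduce to the one-dimensional integral $\int_0^\infty t^{\frac{\alpha}{2}-1}(1-e^{-1/(4t)})dt$. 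Substituting $s=1/(4t)$ and integrating by parts turns this into $4^{\alpha/2}\,\Gamma(1-\frac{\alpha}{2})\cdot\frac{2}{\alpha}=4^{\alpha/2}|\Gamma(-\frac\alpha2)|$. Collecting the constants gives
\[
m(e_1)=\frac{\pi^{n/2}|\Gamma(-\frac\alpha2)|}{2^{\alpha}\Gamma(\frac{n+\alpha}{2})}=C(n,\alpha)^{-1},
\]
so $C(n,\alpha)\,m(e_1)=1$ as required. Consequently $\widehat{C(n,\alpha)Lu}=|\xi|^\alpha\hat u$, which is both formulas by the Fourier inversion theorem.
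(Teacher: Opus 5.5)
Your argument is correct. It follows the standard route behind the cited result; the paper itself gives no proof and only cites it. You symmetrize the principal value into the absolutely convergent second-difference integral, compute its Fourier transform by Fubini, and use rotation and scaling invariance to get $m(\xi)=m(e_1)|\xi|^\alpha$. The Gaussian-subordination evaluation of $m(e_1)$ is the extra step needed here, because the paper defines $C(n,\alpha)$ by the closed Gamma-function formula rather than as $\bigl(\int_{\mathbb{R}^n}(1-\cos\zeta_1)|\zeta|^{-n-\alpha}\,d\zeta\bigr)^{-1}$.

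One slip to fix: the substitution $s=1/(4t)$ gives $\int_0^\infty t^{\frac{\alpha}{2}-1}\bigl(1-e^{-1/(4t)}\bigr)\,dt=4^{-\alpha/2}\,\tfrac{2}{\alpha}\Gamma\bigl(1-\tfrac{\alpha}{2}\bigr)=2^{-\alpha}\bigl|\Gamma\bigl(-\tfrac{\alpha}{2}\bigr)\bigr|$, not $4^{\alpha/2}|\Gamma(-\frac{\alpha}{2})|$. Your final formula for $m(e_1)$, with $2^{\alpha}$ in the denominator, is the correct one, and it follows only from $4^{-\alpha/2}$.
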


The following iteration lemma will be used to control the local fractional energy.

\begin{lemma} [\cite{11}, Lemma~2.1] \label{lem:2}
Let $f(t)$ be a nonnegative bounded function defined on $[r_0,r_1]$, $r_0\ge0$.
Suppose that for $r_0\le t<s\le r_1$, we have
\[
f(t)\le
\left[
\sum_{i=1}^m \bigl(A_i (s-t)^{-\alpha_i}+B_i\bigr)
\right]
+\varepsilon f(s),
\]
where $A_i$, $B_i$, $\alpha_i$, and $\varepsilon$ are nonnegative constants with
$0\le \varepsilon<1$.
Then for all $r_0\le \rho<R\le r_1$ we have
\[
f(\rho)\le
C\left[
\sum_{i=1}^m \bigl(A_i (R-\rho)^{-\alpha_i}+B_i\bigr)
\right],
\]
where $C$ is a constant depending only on $\alpha_i$ and $\varepsilon$.
\end{lemma}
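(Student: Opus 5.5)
The statement is the classical iteration lemma of Giaquinta type (cited from \cite{11}). I would prove it by iterating the hypothesis along a geometric sequence of radii between $\rho$ and $R$, chosen so that the gaps shrink geometrically. The accumulated $\varepsilon$-factors must then beat the blow-up of the $(s-t)^{-\alpha_i}$ terms.

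\textbf{Radii and one step.} Fix $r_0\le\rho<R\le r_1$ and a parameter $\tau\in(0,1)$ to be chosen. Define $t_0=\rho$ and $t_{k+1}=t_k+(1-\tau)\tau^k(R-\rho)$. Then $t_k$ increases to $R$, and $t_{k+1}-t_k=(1-\tau)\tau^k(R-\rho)$. Applying the hypothesis with $t=t_k$, $s=t_{k+1}$ gives
\[
f(t_k)\le \sum_{i=1}^m\Bigl(A_i(1-\tau)^{-\alpha_i}\tau^{-k\alpha_i}(R-\rho)^{-\alpha_i}+B_i\Bigr)+\varepsilon f(t_{k+1}).
\]

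\textbf{Iteration.} Iterating $k$ times from $k=0$ yields
\[
f(\rho)\le \varepsilon^{k}f(t_k)+\sum_{j=0}^{k-1}\varepsilon^j\sum_{i=1}^m\Bigl(A_i(1-\tau)^{-\alpha_i}\tau^{-j\alpha_i}(R-\rho)^{-\alpha_i}+B_i\Bigr).
\]

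\textbf{Choice of $\tau$ and passage to the limit.} Let $\bar\alpha=\max_i\alpha_i$ and choose $\tau$ with $\varepsilon\tau^{-\bar\alpha}<1$; for instance $\tau^{\bar\alpha}=(1+\varepsilon)/2$ when $\varepsilon>0$, and any $\tau$ when $\varepsilon=0$. Then each geometric series $\sum_j(\varepsilon\tau^{-\alpha_i})^j$ converges, as does $\sum_j\varepsilon^j$. Since $f$ is bounded on $[r_0,r_1]$ and $\varepsilon<1$, the remainder $\varepsilon^k f(t_k)$ tends to $0$ as $k\to\infty$. Letting $k\to\infty$ gives the claimed bound with
\[
C=\max\Bigl\{(1-\tau)^{-\bar\alpha}(1-\varepsilon\tau^{-\bar\alpha})^{-1},\,(1-\varepsilon)^{-1}\Bigr\},
\]
which depends only on $\varepsilon$ and the $\alpha_i$. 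Here I use $(1-\tau)^{-\alpha_i}\le(1-\tau)^{-\bar\alpha}$ and $\tau^{-\alpha_i}\le\tau^{-\bar\alpha}$.

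The only delicate point is choosing $\tau$ so that the geometric growth $\tau^{-j\alpha_i}$ coming from the shrinking gaps is dominated by the decay $\varepsilon^j$. The boundedness of $f$ is used exactly once, to discard the remainder term $\varepsilon^k f(t_k)$.
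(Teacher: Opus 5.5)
Your proof is correct, and it is the standard Giaquinta iteration on the radii $t_{k+1}=t_k+(1-\tau)\tau^k(R-\rho)$, with $\tau$ chosen so that $\varepsilon\tau^{-\bar\alpha}<1$ and the boundedness of $f$ used to discard $\varepsilon^k f(t_k)$; this is the argument behind the cited lemma, which the paper quotes without proof. The only nit is that your sample choice $\tau^{\bar\alpha}=(1+\varepsilon)/2$ cannot be made when $\bar\alpha=0$, but in that case every $\tau\in(0,1)$ satisfies $\varepsilon\tau^{-\bar\alpha}<1$, so nothing is lost.
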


The following lemma is the classical Hardy--Littlewood--Sobolev inequality.

\begin{lemma}[\cite{32}, Chapter~V, Theorem~1] \label{lem:3}
Let $0<\alpha<n$ and let $1<r<q<+\infty$ where \(\frac{1}{q}=\frac{1}{r}-\frac{\alpha}{n}.\)
Then, there exists a constant $C=C(q,r)>0$ such that
\[
\|I_\alpha f\|_{L^q(\mathbb{R}^n)}
\le C\,\|f\|_{L^r(\mathbb{R}^n)}.
\]
\end{lemma}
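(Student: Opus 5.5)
The plan is to follow Hedberg's pointwise argument. It reduces the Hardy--Littlewood--Sobolev inequality to the $L^r$-boundedness of the Hardy--Littlewood maximal function
\[
Mf(x)=\sup_{\rho>0}\frac{1}{|B(x,\rho)|}\int_{B(x,\rho)}|f(y)|\,dy.
\]
By density and homogeneity it suffices to take $f\ge 0$, $f\in L^r(\mathbb{R}^n)$, and $\|f\|_{L^r}=1$. Then $I_\alpha f$ is bounded by a positive integral that can be estimated pointwise. The constant $C(n,-\alpha)$ plays no role.

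Fix $x$ and a radius $\delta>0$ to be chosen, and split the integral into the region $|x-y|<\delta$ and its complement.

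\emph{Near part.} Decompose $\{|x-y|<\delta\}$ into dyadic shells $2^{-k-1}\delta\le |x-y|<2^{-k}\delta$, $k\ge 0$. On each shell the kernel is at most $(2^{-k-1}\delta)^{\alpha-n}$, and the integral of $f$ over the shell is at most $C(2^{-k}\delta)^n Mf(x)$. Summing the geometric series, which converges because $\alpha>0$, gives
\[
\int_{|x-y|<\delta}\frac{f(y)}{|x-y|^{n-\alpha}}\,dy\le C\delta^{\alpha}Mf(x).
\]

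\emph{Far part.} Apply H\"older's inequality with exponents $r$ and $r'$. The kernel lies in $L^{r'}(\{|x-y|\ge\delta\})$ exactly when $(n-\alpha)r'>n$, which is equivalent to $\frac1r-\frac{\alpha}{n}>0$, i.e.\ to $q<\infty$. Computing the norm gives the bound
\[
C\,\delta^{\alpha-\frac nr}\|f\|_{L^r}=C\,\delta^{-\frac nq}.
\]

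\emph{Optimization.} Choosing $\delta=Mf(x)^{-r/n}$ balances the two terms: indeed $\delta^\alpha Mf=Mf^{1-\alpha r/n}=Mf^{r/q}$ and $\delta^{-n/q}=Mf^{r/q}$. Hence
\[
|I_\alpha f(x)|\le C\,Mf(x)^{r/q}.
\]
Raising this to the power $q$ and integrating gives $\|I_\alpha f\|_{L^q}^q\le C\|Mf\|_{L^r}^r$.

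The only substantial input is the maximal theorem $\|Mf\|_{L^r}\le C\|f\|_{L^r}$, which holds since $r>1$. It is proved by Vitali covering (weak $(1,1)$) and Marcinkiewicz interpolation with the trivial $L^\infty$ bound. This yields $\|I_\alpha f\|_{L^q}\le C$, and undoing the normalization gives the lemma.

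I expect the main obstacle to be the maximal theorem itself, since the rest is elementary. If one wants to avoid it, an alternative is to prove the weak-type estimate $|\{|I_\alpha f|>t\}|\le C t^{-q}$ by the same near/far splitting for all $1\le r<n/\alpha$, and then apply Marcinkiewicz interpolation between two such pairs $(r_0,q_0)$ and $(r_1,q_1)$ to obtain the strong bound. The case $x$ with $Mf(x)=0$ is trivial, since then $f=0$ a.e.
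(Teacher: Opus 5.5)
Your proof is correct, but it is not the argument behind the cited result. The paper gives no proof and defers to Stein (Chapter~V, Theorem~1). Stein's proof is essentially the fallback you mention at the end. For a fixed level $\lambda$ he splits $|x|^{\alpha-n}$ at a radius $\mu=\mu(\lambda,\|f\|_{L^r})$. He bounds the far piece of $I_\alpha f$ in $L^\infty$ by H\"older, and the near piece in $L^r$ by Young's inequality with an integrable kernel. This yields the weak-type bound $|\{|I_\alpha f|>\lambda\}|\le (C\|f\|_{L^r}/\lambda)^q$ for every $1\le r<n/\alpha$, and the off-diagonal Marcinkiewicz interpolation theorem then gives the strong bound. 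Your Hedberg route replaces this with the pointwise domination $I_\alpha|f|(x)\le C\,\|f\|_{L^r}^{1-r/q}\,(Mf(x))^{r/q}$. The only analytic input is then the $L^r$-boundedness of $M$, which needs only diagonal interpolation. It also buys you:
\begin{itemize}
\item an explicit pointwise estimate;
\item the weak-type endpoint $r=1$ directly from the weak $(1,1)$ bound for $M$;
\item a.e.\ finiteness of $I_\alpha|f|$ from $Mf<\infty$ a.e.;
\item an argument that carries over with the same splitting to Morrey spaces (Adams' theorem).
\end{itemize}
Stein's route, in turn, avoids the maximal function and covering lemmas entirely, using only H\"older and Young on truncated kernels, at the cost of the more general interpolation theorem. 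Your exponent bookkeeping is right: $(n-\alpha)r'>n$ iff $q<\infty$, and $\delta=(Mf)^{-r/n}$ gives $\delta^{\alpha}Mf=\delta^{-n/q}=(Mf)^{r/q}$. The appeal to density is unnecessary, since the pointwise bound holds for every $f\in L^r$.
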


We derive the following $L^q$ estimate for smooth solutions of \eqref{eq:1.1}, which will be used in the bootstrap argument.

\begin{lemma} \label{lem:4}
Let $u$ be a smooth solution of \eqref{eq:1.1} satisfying $u \in L^r(\mathbb{R}^n)$, where $r$ is specified below. Then, for $3\le n \le6$, the following estimates hold:
\begin{equation*}
\begin{aligned}
\text{(i)} \ &\alpha=1:  
&& \|u \|_{L^q(\mathbb{R}^n)} \le C\|u\|_{L^{r}(\mathbb{R}^n)}^2, \quad r=2q,
\quad 1<q<+\infty, \\
\text{(ii)} \ & 1<\alpha<\frac{n+2}{3}: 
&& \|u\|_{L^q(\mathbb{R}^n)} \le C\|u\|_{L^{r}(\mathbb{R}^n)}^2,
\quad \frac{1}{q}=\frac{2}{r}-\frac{\alpha-1}{n}, \quad 2<r<2q<+\infty.
\end{aligned}
\end{equation*}
\end{lemma}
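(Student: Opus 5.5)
Since $\nabla\cdot u=0$, I write the nonlinear term in divergence form, $(u\cdot\nabla)u=\nabla\cdot(u\otimes u)$, and take the divergence of \eqref{eq:1.1} to get the pressure equation $-\Delta p=\partial_i\partial_j(u_iu_j)$. This gives $p=\mathcal R_i\mathcal R_j(u_iu_j)$, with $\mathcal R_i$ the Riesz transforms. Eliminating the pressure with the Leray projector $\mathbb P$, the velocity then satisfies
\[
(-\Delta)^{\frac{\alpha}{2}}u=-\mathbb P\,\nabla\cdot(u\otimes u),
\qquad\text{so}\qquad
u=-(-\Delta)^{-\frac{\alpha}{2}}\mathbb P\,\nabla\cdot(u\otimes u)=-I_{\alpha-1}\,\mathcal T(u\otimes u),
\]
where $\mathcal T:=(-\Delta)^{-\frac12}\mathbb P\nabla\cdot$ is a matrix of compositions of Riesz transforms, hence a Calder\'on--Zygmund operator of order zero. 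The point is that $(-\Delta)^{-\frac{\alpha}{2}}\nabla=(-\Delta)^{-\frac{\alpha-1}{2}}\cdot(-\Delta)^{-\frac12}\nabla$ trades one derivative for one order of the Riesz potential.

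For case (ii), $1<\alpha<\frac{n+2}{3}$, I apply Lemma~\ref{lem:3} with order $\alpha-1\in(0,n)$ and then $L^{r/2}$-boundedness of $\mathcal T$:
\[
\|u\|_{L^q}\le C\|\mathcal T(u\otimes u)\|_{L^{r/2}}\le C\|u\otimes u\|_{L^{r/2}}\le C\|u\|_{L^r}^2,
\]
with $\frac1q=\frac{2}{r}-\frac{\alpha-1}{n}$. The hypotheses $2<r<2q<\infty$ give exactly $1<\frac r2<q<\infty$, which is what Lemma~\ref{lem:3} and the Riesz bounds require. For case (i), $\alpha=1$, the potential disappears: $u=-\mathcal T(u\otimes u)$. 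Boundedness of Riesz transforms on $L^q$, $1<q<\infty$, then gives $\|u\|_{L^q}\le C\|u\|_{L^{2q}}^2$.

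The main obstacle is justifying the representation formula for $u$, i.e.\ showing there is no harmonic-type polynomial remainder. A smooth solution satisfies the equation only classically, so $u$ and $-I_{\alpha-1}\mathcal T(u\otimes u)$ could a priori differ by a function $h$ with $(-\Delta)^{\alpha/2}h=0$ and $\nabla\cdot h=0$. I plan the following steps.
\begin{enumerate}[label=(\alph*)]
\item Since $u\in L^r$, we have $u\otimes u\in L^{r/2}$ with $1<\frac r2$. By Lemma~\ref{lem:3}, $v:=-I_{\alpha-1}\mathcal T(u\otimes u)$ is a well-defined $L^q$ function.
\item Using the Fourier characterization of $(-\Delta)^{\alpha/2}$, the tempered distribution $h=u-v$ has $\widehat h$ supported at the origin, so $h$ is a polynomial.
\item Both $u$ and $v$ lie in $L^r+L^q$, and no nonzero polynomial belongs to $L^r+L^q$. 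Hence $h\equiv0$.
\end{enumerate}
Likewise I will take $p$ to be the normalized pressure $\mathcal R_i\mathcal R_j(u_iu_j)$, which is possible since the pressure is determined up to a constant. The dimension restriction $3\le n\le6$ plays no role beyond ensuring $0<\alpha-1<n$.
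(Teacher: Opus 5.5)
Your proof is correct and is essentially the paper's argument. You write $u=-(-\Delta)^{-\frac{\alpha}{2}}\mathbb P\nabla\cdot(u\otimes u)$, factor $\nabla(-\Delta)^{-\frac{\alpha}{2}}$ into Riesz transforms composed with $(-\Delta)^{-\frac{\alpha-1}{2}}$, and combine the $L^p$-boundedness of the order-zero part with Lemma~\ref{lem:3}; the order in which you apply the two bounds does not matter.

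Your steps (a)--(c), which exclude a polynomial remainder, justify the representation formula that the paper simply asserts. The one loose point is the pressure: $p-\mathcal R_i\mathcal R_j(u_iu_j)$ is a priori only harmonic, so you cannot just ``choose'' $p$. It is cleaner to eliminate $p$ by pairing the equation with divergence-free test fields whose Fourier transforms vanish near the origin.
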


\begin{proof}[Proof of Lemma~\ref{lem:4}]
Let $u$ be a smooth solution of \eqref{eq:1.1}. Then, we may write
\[
u= -(-\Delta)^{-\frac{\alpha}{2}}
\,\mathbb P \nabla\cdot(u\otimes u).
\]
When $\alpha=1$, since $\nabla(-\Delta)^{-\frac{1}{2}}=\mathcal R$ where $\mathcal R$ denotes the Riesz transforms, the $L^q$-boundedness of the Leray projection and the Riesz transforms yields
\[
\|u\|_{L^q(\mathbb{R}^n)} 
\le C\|u\otimes u\|_{L^q(\mathbb{R}^n)} \le C\|u\|_{L^{2q}(\mathbb{R}^n)}^2, \quad 1<q<+\infty.
\]
For \(1<\alpha<\frac{n+2}{3}\), since $\nabla(-\Delta)^{-\frac{\alpha}{2}} = \mathcal R(-\Delta)^{-\frac{\alpha-1}{2}}$, the $L^q$-boundedness of the Leray projection and the Riesz transforms yields
\[
\|u\|_{L^q(\mathbb{R}^n)}
\le C \|(-\Delta)^{\frac{1-\alpha}{2}}(u\otimes u)\|_{L^q(\mathbb{R}^n)},
\quad 1<q<+\infty.
\]
Using Lemma~\ref{lem:3}, we get
\[
\|u\|_{L^q(\mathbb{R}^n)} 
\le C\|u\otimes u\|_{L^\frac{r}{2}(\mathbb{R}^n)} \le C\|u\|_{L^{r}(\mathbb{R}^n)}^2,
\]
where $\frac{1}{q}=\frac{2}{r}-\frac{\alpha-1}{n}$ with $2<r<\frac{2n}{\alpha-1}$.
\end{proof}

\section{Proof of the main results}
Let $\phi \in C_c^\infty(\mathbb R^n)$ be a radial non-increasing function such that
\[
0 \le \phi \le 1, \quad 
\phi(x)=1 \ \text{for } |x|<1, \quad 
\phi(x)=0 \ \text{for } |x|\ge 2.
\]
For $R>1$, define the standard cut-off function 
\[
\phi_R(x):=\phi\!\left(\frac{x}{R}\right).
\]
Then $0\le \phi_R\le 1$, $\phi_R(x)=1$ for $|x|<R$, and $\phi_R(x)=0$ for $|x|\ge 2R$. Moreover,
\[
\|\nabla^k \phi_R\|_{L^\infty(\mathbb R^n)} \le C R^{-k}, \quad k\in\mathbb N,
\]
and $\operatorname{supp}(\nabla \phi_R)\subset A(R)$.

Multiplying equation~\eqref{eq:1.1} by $\phi_R^2 u$ and integrating it over $\mathbb R^n$, we get
\begin{equation} \label{eq:3.1} \tag{3.1}
\begin{aligned}
\int_{\mathbb R^n} \phi_R^2|(-\Delta)^{\frac{\alpha}{4}}u|^2 &=
\frac{1}{2} \int_{\mathbb R^n} |u|^2u \cdot \nabla (\phi_R^2) \, dx 
- \int_{\mathbb R^n} pu \cdot \nabla (\phi_R^2) \, dx \\
&\quad -\int_{\mathbb R^n} (-\Delta)^{\frac{\alpha}{4}} u \cdot [(-\Delta)^{\frac{\alpha}{4}},\phi_R^2]u\, dx =: I_1+I_2+I_3,
\end{aligned}
\end{equation}
where 
\[
[(-\Delta)^{\frac{\alpha}{4}},\phi_R^2]u := 
(-\Delta)^{\frac{\alpha}{4}}(u\phi_R^2) - \phi_R^2 (-\Delta)^{\frac{\alpha}{4}} u.
\]
Since $\nabla \cdot u = 0$, the pressure can be described as
\[
p = R_i R_j (u_i u_j),
\]
where $R_j$ is the $j$-th Riesz transform. Hence,
\begin{equation} \label{eq:3.2} \tag{3.2}
\|p\|_{L^r(\mathbb{R}^n)} \le C \| u \|_{L^{2r}(\mathbb{R}^n)}^2,
\quad 1<r<+\infty.    
\end{equation}
By H\"older's inequality and \eqref{eq:3.2}, we can estimate
\begin{equation} \label{eq:3.3} \tag{3.3}
\begin{aligned}
I_1+I_2 &\le 
C R^{n-1-\frac{3n}{r}}\|u\|_{L^{r}(A(R))}^3 
+ C R^{n-1-\frac{3n}{r}}\|u\|_{L^{r}(A(R))}\|u\|_{L^{r}(\mathbb{R}^n)}^2 \\
&\le \,C R^{n-1-\frac{3n}{r}}\|u\|_{L^{r}(A(R))}\|u\|_{L^{r}(\mathbb{R}^n)}^2,
\end{aligned}
\end{equation}
where $3 \le r \le \frac{3n}{n-1}$.

By a direct computation
\[
[(-\Delta)^{\frac{\alpha}{4}},\phi_R^2]u
=
\phi_R[(-\Delta)^{\frac{\alpha}{4}},\phi_R]u +
[(-\Delta)^{\frac{\alpha}{4}},\phi_R](\phi_R u),
\]
we may rewrite $I_3$ as
\begin{equation*}
\begin{aligned}
I_3 &= -\int_{B_{2R}} \phi_R(-\Delta)^{\frac{\alpha}{4}} u 
\cdot [(-\Delta)^{\frac{\alpha}{4}},\phi_R]u\, dx 
-\int_{\mathbb R^n} (-\Delta)^{\frac{\alpha}{4}} u 
\cdot [(-\Delta)^{\frac{\alpha}{4}},\phi_R](\phi_R u)\, dx \\
&=:I_4+I_5.
\end{aligned}
\end{equation*}

We begin by establishing fractional commutator estimates for $I_4$, dividing the cases $0<\alpha<2$ and $2<\alpha<4$. This is because the near-field kernel is directly integrable in the former case, whereas in the latter case a first-order cancellation is needed to reduce the singularity.
\begin{proposition} \label{prop:1}
Assume that $u$ is a smooth solution of \eqref{eq:1.1} satisfying the assumptions of Theorem~\ref{thm:1}. For $3\le n \le 5$ and $0<\alpha<2$, we have
\[
I_4 \le
\frac{1}{4} \|(-\Delta)^{\frac{\alpha}{4}}u\|_{L^{2}(B_{2R})}^2
+ C R^{n-\alpha-\frac{2n}{p}} \|u\|_{L^{p}(\mathbb{R}^n)}^2
\]
where $2\le p< \frac{2n}{n-\alpha}$.
\end{proposition}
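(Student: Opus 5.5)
By Cauchy--Schwarz and Young's inequality,
\[
I_4 \le \|(-\Delta)^{\frac{\alpha}{4}}u\|_{L^2(B_{2R})}\,\|[(-\Delta)^{\frac{\alpha}{4}},\phi_R]u\|_{L^2(B_{2R})}
\le \tfrac14\|(-\Delta)^{\frac{\alpha}{4}}u\|_{L^2(B_{2R})}^2 + \|[(-\Delta)^{\frac{\alpha}{4}},\phi_R]u\|_{L^2(B_{2R})}^2 .
\]
So it suffices to show
\[
\|[(-\Delta)^{\frac{\alpha}{4}},\phi_R]u\|_{L^2(B_{2R})}\le CR^{\frac n2-\frac{\alpha}{2}-\frac np}\|u\|_{L^p(\mathbb R^n)}, \qquad 2\le p<\tfrac{2n}{n-\alpha}.
\]
Since $0<\frac{\alpha}{2}<1$, Lemma~\ref{lem:1} applies with order $\frac{\alpha}{2}$. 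Writing $\beta=\frac{\alpha}{2}$, the commutator has the pointwise representation
\[
[(-\Delta)^{\beta},\phi_R]u(x)=C(n,\alpha/2)\,\mathrm{P.V.}\!\int_{\mathbb R^n}\frac{(\phi_R(y)-\phi_R(x))\,u(y)}{|x-y|^{n+\beta}}\,dy ,
\]
up to sign. The principal value is harmless because $|\phi_R(y)-\phi_R(x)|\le CR^{-1}|x-y|$ and $\beta<1$. I will split the $y$-integral into a near part $|x-y|<R$ and a far part $|x-y|\ge R$.

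\textbf{Near part.} Here I use $|\phi_R(x)-\phi_R(y)|\le CR^{-1}|x-y|$, which gives the kernel bound $CR^{-1}|x-y|^{1-n-\beta}\mathbf 1_{|x-y|<R}$. Its $L^1$ norm is $CR^{-1}\cdot R^{1-\beta}=CR^{-\beta}$. Young's convolution inequality then bounds the near part in $L^p(\mathbb R^n)$ by $CR^{-\beta}\|u\|_{L^p}$. Passing from $L^p(B_{2R})$ to $L^2(B_{2R})$ by H\"older costs $R^{\frac n2-\frac np}$ (this uses $p\ge2$), giving $CR^{\frac n2-\frac{\alpha}{2}-\frac np}\|u\|_{L^p}$.

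\textbf{Far part.} Here I use only $|\phi_R(x)-\phi_R(y)|\le 1$ and decompose the tail dyadically into shells $2^kR\le|x-y|<2^{k+1}R$, $k\ge0$. By H\"older on the shell of volume $\sim(2^kR)^n$, each shell contributes at most
\[
C(2^kR)^{-n-\beta}(2^kR)^{n(1-\frac1p)}\|u\|_{L^p}=C(2^kR)^{-\beta-\frac np}\|u\|_{L^p}.
\]
The exponent $-\beta-\frac np$ is negative, so the series in $k$ converges and the far part is bounded pointwise by $CR^{-\frac{\alpha}{2}-\frac np}\|u\|_{L^p}$, uniformly in $x$. Its $L^2(B_{2R})$ norm is then at most $CR^{\frac n2}R^{-\frac\alpha2-\frac np}\|u\|_{L^p}$, the same power as the near part.

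Squaring the sum of the two bounds gives $CR^{n-\alpha-\frac{2n}{p}}\|u\|_{L^p}^2$, as claimed. The condition $p<\frac{2n}{n-\alpha}$ is not needed for the bound itself; it makes the exponent $n-\alpha-\frac{2n}{p}$ negative, which is what is used later. The main obstacle is justifying the pointwise kernel representation of the commutator for a smooth $u$ that is merely in $L^p$ and bounded, rather than Schwartz. I would handle this by applying Lemma~\ref{lem:1} to $\phi_R u$ (compactly supported) and noting that $u\in L^\infty\cap C^2$ makes the singular integral for $u$ absolutely convergent away from the diagonal. The dyadic tail sum itself is routine.
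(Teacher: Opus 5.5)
Your proof is correct and follows essentially the same route as the paper. The paper also splits the commutator kernel into near and far parts at a scale comparable to $R$, and bounds the near part with $|\phi_R(x)-\phi_R(y)|\le CR^{-1}|x-y|$; it uses Minkowski's inequality in $L^2(B_{2R})$ followed by H\"older on $B_{6R}$, where you use Young's convolution inequality in $L^p$ followed by H\"older. It treats the tail with a single H\"older step over $|y|>4R$ instead of dyadic shells, and closes with Young's inequality exactly as you do. Your two remarks are accurate: the condition $p<\frac{2n}{n-\alpha}$ only makes the exponent negative, and the kernel representation for non-Schwartz $u$ needs a word of justification, which the paper omits. One small slip: the operator in your displayed representation should read $(-\Delta)^{\beta/2}$, not $(-\Delta)^{\beta}$.
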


\begin{proof} [Proof of Proposition~\ref{prop:1}]
For convenience, we set
\[
\gamma := \frac{\alpha}{2}, \quad \Lambda^\gamma := (-\Delta)^{\frac{\gamma}{2}}, 
\quad [\Lambda^\gamma, \phi_R]\,u
=
\Lambda^\gamma(u\phi_R) - \phi_R \Lambda^\gamma u.
\]
Using Lemma~\ref{lem:1}, we may write \([\Lambda^\gamma,\phi_R]u(x)=\mathcal N_R(x)+\mathcal F_R(x),\)
where
\begin{equation*}
\begin{aligned}
\mathcal N_R(x) :=& -C(n,\gamma)\int_{|y|\le 4R}\frac{\big(\phi_R(x+y)-\phi_R(x)\big)u(x+y)}{|y|^{n+\gamma}}\,dy \\
&-C(n,\gamma)\int_{|y|\le 4R}\frac{\big(\phi_R(x-y)-\phi_R(x)\big)u(x-y)}{|y|^{n+\gamma}}\,dy,
\end{aligned}
\end{equation*}
and using $\phi_R(x\pm y)=0$ for $|y|>4R$,
\[
\mathcal F_R(x):=
C(n,\gamma)\,\phi_R(x)\int_{|y|>4R}\frac{u(x+y)+u(x-y)}{|y|^{n+\gamma}}\,dy.
\]
Here we omit the principal value since the integral is integrable for $0<\gamma<1$.

We now estimate the near part $\mathcal N_R$. Using $|\phi_R(x\pm y)-\phi_R(x)|\le C R^{-1}|y|$, we have
\[
|\mathcal N_R(x)|
\le C R^{-1}\int_{|y|\le 4R}
\frac{|u(x+y)|+|u(x-y)|}{|y|^{n-1+\gamma}}\,dy.
\]
Taking the $L^2$-norm and applying Minkowski's inequality, we obtain\
\begin{equation*}
\begin{aligned}
\|\mathcal N_R\|_{L^2(B_{2R})}
&\le C R^{-1}\int_{|y|\le 4R}
\frac{\|u(\,\cdot+y)\|_{L^2(B_{2R})}+\|u(\,\cdot-y)\|_{L^2(B_{2R})}}{|y|^{n-1+\gamma}}\,dy \\
&\le C R^{-1}\int_{|y|\le 4R}
\frac{\|u\|_{L^2(B_{6R})}}{|y|^{n-1+\gamma}}\,dy.
\end{aligned}
\end{equation*}
Using the polar coordinates, we get
\[
\int_{|y|\le 4R}\frac{dy}{|y|^{n-1+\gamma}}
\leq C\int_0^{4R} r^{n-1-(n-1+\gamma)}\,dr
= C\int_0^{4R} r^{-\gamma}\,dr
\leq CR^{1-\gamma}.
\]
Therefore, by Hölder's inequality,
\begin{equation} \label{eq:3.4} \tag{3.4}
\|\mathcal N_R\|_{L^2(B_{2R})}
\le C R^{-\gamma}\|u\|_{L^2(B_{6R})}
\le C R^{\frac{n}{2}-\gamma-\frac{n}{p}}
\|u\|_{L^{p}(\mathbb R^n)},    
\end{equation}
where $2\le p< \frac{2n}{n-2\gamma}$.

Next, we estimate the far part $\mathcal F_R$. Since $|\phi_R(x)|\le 1$ for all $x\in\mathbb{R}^n$, we have
\[
|\mathcal F_R(x)|
\le C\int_{|y|>4R}\frac{|u(x+y)|+|u(x-y)|}{|y|^{n+\gamma}}\,dy.
\]
Applying H\"older's inequality with $\frac{1}{p}+\frac{1}{p'}=1$, we obtain
\begin{equation*}
\begin{aligned}
\int_{|y|>4R}\frac{|u(x\pm y)|}{|y|^{n+\gamma}}\,dy
&\leq
 C\Big(\int_{|y|>4R}|u(x\pm y)|^{p}\,dy\Big)^{\frac{1}{p}}
\Big(\int_{|y|>4R}|y|^{-(n+\gamma)p'}\,dy\Big)^{\frac{1}{p'}} \\
& \le C\|u\|_{L^{p}(\mathbb R^n)}\Big(\int_{|y|>4R}|y|^{-(n+\gamma)p'}\,dy\Big)^{\frac{1}{p'}}.
\end{aligned}
\end{equation*}
Here, we note that for $2\le p < \frac{2n}{n-2\gamma}$,
\[
\Big(\int_{|y|>4R}|y|^{-(n+\gamma)p'}\,dy\Big)^{1/p'}
\leq C \Big(\int_{4R}^\infty r^{n-1-(n+\gamma)p'}\,dr\Big)^{1/p'}
\leq C R^{\frac{n}{p'}-(n+\gamma)}
= R^{-\frac{n}{p}-\gamma}.
\]
Taking the $L^2$-norm, we obtain
\begin{align*} \label{eq:3.5} \tag{3.5}
\|\mathcal F_R\|_{L^2(B_{2R})}
\le
C R^{-\frac{n}{p}-\gamma}\,|B_{2R}|^{\frac{1}{2}}\,
\|u\|_{L^{p}(\mathbb R^n)}
\le C R^{\frac{n}{2}-\gamma-\frac{n}{p}}\|u\|_{L^{p}(\mathbb R^n)}.
\end{align*}
Combining the results \eqref{eq:3.4} and \eqref{eq:3.5}, we have
\begin{equation} \label{eq:3.6} \tag{3.6}
\|[(-\Delta)^{\frac{\alpha}{4}},\phi_R]u\|_{L^2(B_{2R})} 
\le C\|\mathcal N_R + \mathcal F_R\|_{L^2(B_{2R})}
\le CR^{\frac{n}{2}-\frac{\alpha}{2}-\frac{n}{p}} \|u\|_{L^{p}(\mathbb{R}^n)}
\end{equation}
Using \eqref{eq:3.6} and Young's inequality, we obtain
\begin{align*}
\int_{B_{2R}} \phi_R(-\Delta)^{\frac{\alpha}{4}} u 
\cdot [(-\Delta)^{\frac{\alpha}{4}},\phi_R]u\, dx 
& \le C \|(-\Delta)^{\frac{\alpha}{4}}u\|_{L^{2}(B_{2R})}R^{\frac{n}{2}-\frac{\alpha}{2}-\frac{n}{p}} \|u\|_{L^{p}(\mathbb{R}^n)} \\
& \le \frac{1}{4} \|(-\Delta)^{\frac{\alpha}{4}}u\|_{L^{2}(B_{2R})}^2+
CR^{n-\alpha-\frac{2n}{p}} \|u\|_{L^{p}(\mathbb{R}^n)}^2,
\end{align*}
where $2\le p< \frac{2n}{n-\alpha}$.
\end{proof}

\begin{proposition} \label{prop:2}
Assume that $u$ is a smooth solution of \eqref{eq:1.1} satisfying the assumptions of Theorem~\ref{thm:2}. For $5\le n \le 6$ and $2<\alpha<4$, we have
\[
I_4\le \frac{1}{4} \|(-\Delta)^{\frac{\alpha}{4}}u\|_{L^{2}(B_{2R})}^2+
CR^{n-\alpha-\frac{2n}{p}} \|u\|_{L^{p}(\mathbb{R}^n)}^2
+CR^{n+2-\alpha-\frac{2n}{q}} \|\nabla u\|_{L^{q}(\mathbb{R}^n)}^2,
\]
where $2\le p < \frac{2n}{n-\alpha}$ and $2 \le q < \frac{2n}{n+2-\alpha}$.
\end{proposition}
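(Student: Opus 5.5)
We follow the proof of Proposition~\ref{prop:1}, again with $\gamma:=\frac{\alpha}{2}$, now in the range $1<\gamma<2$. The point is that $\Lambda^\gamma=(-\Delta)^{\gamma/2}$ no longer has the first-order kernel representation of Lemma~\ref{lem:1}. We therefore factor it as $\Lambda^\gamma=\Lambda^{\gamma-1}\Lambda^{1}$, or equivalently use $\Lambda^\gamma=-\Lambda^{\gamma-2}\Delta$ with $0<\gamma-1<1$. The most convenient splitting is
\[
[\Lambda^\gamma,\phi_R]u=[\Lambda^{\gamma-1},\phi_R]\Lambda u+\Lambda^{\gamma-1}[\Lambda,\phi_R]u .
\]
Even better, we write $\Lambda^{\gamma}= \Lambda^{\gamma-2}(-\Delta)$ and expand $-\Delta(\phi_R u)=\phi_R(-\Delta u)-2\nabla\phi_R\cdot\nabla u-u\Delta\phi_R$. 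This gives
\[
[\Lambda^\gamma,\phi_R]u=[\Lambda^{\gamma-2}\!\cdot\!(-\Delta)\text{-part}]
=\bigl(\Lambda^{\gamma-2}(\phi_R\,\cdot)-\phi_R\Lambda^{\gamma-2}\bigr)(-\Delta u)-\Lambda^{\gamma-2}\bigl(2\nabla\phi_R\cdot\nabla u+u\Delta\phi_R\bigr).
\]
The first term, however, involves $\Delta u$, which is not controlled. So we instead keep a first-order structure. We use the symmetric second-difference kernel valid for $0<\gamma<2$,
\[
\Lambda^\gamma f(x)=-\tfrac12 C\int\frac{f(x+y)+f(x-y)-2f(x)}{|y|^{n+\gamma}}dy,
\]
which we take to hold for $\gamma\in(1,2)$ with the appropriate constant (the standard second-difference formula). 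Applying it to $f=\phi_R u$ and subtracting $\phi_R\Lambda^\gamma u$, the commutator kernel is built from
\[
(\phi_R(x+y)-\phi_R(x))u(x+y)+(\phi_R(x-y)-\phi_R(x))u(x-y).
\]

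\textbf{Near field $|y|\le 4R$.} Here the integrand must be of order $|y|^2$ to beat $|y|^{-n-\gamma}$ with $\gamma>1$; this is the first-order cancellation mentioned before the proposition. We Taylor expand
\[
\phi_R(x\pm y)-\phi_R(x)=\pm\nabla\phi_R(x)\cdot y+O(R^{-2}|y|^2)
\]
and $u(x\pm y)=u(x)\pm\int_0^1\nabla u(x\pm ty)\cdot y\,dt$. The first-order terms $\nabla\phi_R(x)\cdot y\,(u(x+y)-u(x-y))$ are bounded by $R^{-1}|y|^2\int_0^1|\nabla u(x\pm ty)|dt$; the constant part $u(x)$ cancels by symmetry. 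The remainder is bounded by $R^{-2}|y|^2(|u(x+y)|+|u(x-y)|)$. By Minkowski, as in \eqref{eq:3.4},
\[
\|\mathcal N_R\|_{L^2(B_{2R})}\le C\int_{|y|\le4R}\frac{R^{-1}\|\nabla u\|_{L^2(B_{6R})}+R^{-2}\|u\|_{L^2(B_{6R})}}{|y|^{n+\gamma-2}}dy
\le C R^{1-\gamma}\|\nabla u\|_{L^2(B_{6R})}+CR^{-\gamma}\|u\|_{L^2(B_{6R})}.
\]
The integral converges because $\gamma<2$. Hölder's inequality then gives the bound $R^{\frac n2-\gamma-\frac np}\|u\|_{L^p}+R^{\frac n2+1-\gamma-\frac nq}\|\nabla u\|_{L^q}$ with $p,q\ge2$.

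\textbf{Far field $|y|>4R$.} This is handled exactly as in \eqref{eq:3.5}: since $\phi_R(x\pm y)=0$ there, we get $\mathcal F_R=C\phi_R(x)\int_{|y|>4R}(u(x+y)+u(x-y))|y|^{-n-\gamma}dy$, and Hölder's inequality gives $CR^{\frac n2-\gamma-\frac np}\|u\|_{L^p}$. The restrictions $p<\frac{2n}{n-\alpha}$ and $q<\frac{2n}{n+2-\alpha}$ merely record the ranges where the resulting powers of $R$ are negative; Hölder itself only needs $p,q\ge2$.

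\textbf{Conclusion and main obstacle.} We finish with Young's inequality as in Proposition~\ref{prop:1}, squaring the three contributions. The main obstacle is justifying the second-difference representation for $1<\gamma<2$ and organizing the near-field Taylor expansion so that only $\nabla u$, and not $\nabla^2u$, appears. The key is to expand $\phi_R$ to second order while keeping $u(x+y)-u(x-y)$ at first order. If that representation is unavailable, we would fall back on the factorization $\Lambda^\gamma=\Lambda^{\gamma-1}\Lambda$ together with the Kato–Ponce-type kernel bounds from Proposition~\ref{prop:1}.
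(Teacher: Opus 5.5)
Your proposal is correct and is essentially the paper's argument. Both proofs expand $\phi_R$ to second order, move the first-order term onto $\nabla u$ via the mean value theorem, and finish with Minkowski/H\"older and Young. Your symmetric second-difference form simply builds in the cancellation that the paper obtains by adding and subtracting $\nabla\phi_R(x)\cdot(x-y)\mathbf{1}_{|x-y|\le 2R}$ in the one-sided principal-value kernel: symmetrizing the paper's $T_2$ under $z\mapsto -z$ gives, up to constants and the cutoff radius, your term $\nabla\phi_R(x)\cdot y\,(u(x+y)-u(x-y))$.

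The obstacle you flag is not real. The commutator involves $(-\Delta)^{\gamma/2}$ with $\gamma=\alpha/2\in(1,2)$, so Lemma~\ref{lem:1} applies directly, and neither the factorization fallback nor the discarded $\Lambda^{\gamma-2}(-\Delta)$ route is needed.
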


\begin{proof} [Proof of Proposition~\ref{prop:2}]
Similarly, we use the same notation as above. By Lemma~\ref{lem:1}, we get
\begin{equation*}
[\Lambda^\gamma,\phi_R]u(x)
= C(n,\gamma)\,\mathrm{P.V.}\int_{\mathbb{R}^n}
\frac{(\phi_R(x)-\phi_R(y))u(y)}{|x-y|^{n+\gamma}}\,dy.
\end{equation*}
Here, the commutator can be written in the form
$[\Lambda^\gamma, \phi_R]u(x) = T_1(x)+T_2(x),$ where
\[
T_1(x) = C(n,\gamma) \int_{\mathbb{R}^n} \frac{\phi_R(x)-\phi_R(y) 
-\nabla\phi_R(x)\cdot(x-y)\mathbf{1}_{|x-y|\le 2R}} {|x-y|^{n+\gamma}} \,u(y)\,dy,
\]
and
\[
T_2(x) = - C(n,\gamma)\nabla\phi_R(x)\cdot \int_{|x-y|\le 2R} \frac{(u(x)-u(y))(x-y)} {|x-y|^{n+\gamma}} \,dy.
\]
Here we omit the principal value since it is integrable after cancellation for $1<\gamma<2$.

We first estimate $T_1$. Using the first-order Taylor expansion
\[
\left|\phi_R(x)-\phi_R(y) - \nabla \phi_R(x)\cdot (x-y) \right|
\le C R^{-2}|x-y|^2, \quad |x-y|\le 2R,
\]
we have
\begin{align*}
|T_1(x)| \le C R^{-2} \int_{|x-y|\le 2R} \frac{|u(y)|}{|x-y|^{n+\gamma-2}} \,dy
+ C\int_{|x-y|>2R} \frac{|u(y)|}{|x-y|^{n+\gamma}} \,dy  
\end{align*}
Changing variables $z = x-y$, we obtain
\begin{align*}
|T_1(x)| \le CR^{-2} \int_{|z|\le 2R} \frac{|u(x-z)|}{|z|^{n+\gamma-2}}\,dz
+ C\int_{|z|>2R} \frac{|u(x-z)|}{|z|^{n+\gamma}}\,dz.
\end{align*}
Using the same argument as in the estimates of \eqref{eq:3.4} and \eqref{eq:3.5}, we obtain
\begin{equation} \label{eq:3.7} \tag{3.7}
\|T_1\|_{L^2(B_{2R})} \le CR^{\frac{n}{2}-\gamma-\frac{n}{p}} \|u\|_{L^{p}(\mathbb{R}^n)},    
\end{equation}
where $2\le p< \frac{2n}{n-2\gamma}$.

We now estimate $T_2$. Using the mean value theorem 
\[
u(x)-u(y) = \int_0^1 \nabla u\big(x-t(x-y)\big)\cdot(x-y)\,dt,
\]
we get 
\[
|T_2(x)| \le C\,|\nabla\phi_R(x)| \int_0^1 \int_{|x-y|\le 2R} 
\frac{|\nabla u(x-t(x-y))|}{|x-y|^{n+\gamma-2}} \,dy\,dt.
\]
Changing variables $z = x-y$, we obtain
\[
|T_2(x)| \le C\,|\nabla\phi_R(x)| \int_0^1 \int_{|z|\le 2R} 
\frac{|\nabla u(x-tz)|} {|z|^{n+\gamma-2}} \,dz\,dt.
\]
Taking the $L^2$-norm and applying Minkowski's inequality, we obtain
\begin{equation*}
\begin{aligned}
\|T_2\|_{L^2(B_{2R})} &\le
C R^{-1} \int_0^1 \int_{|z|\le 2R} 
\frac{\|\nabla u(\,\cdot - tz)\|_{L^2(B_{2R})}}{|z|^{n+\gamma-2}} \,dz\,dt \\
&\le CR^{-1} \int_{|z|\le 2R} \frac{\|\nabla u\|_{L^2(B_{4R})}}{|z|^{n+\gamma-2}} \,dz.  
\end{aligned}
\end{equation*}
Using the same argument as in the estimates of \eqref{eq:3.4}, we obtain
\begin{equation} \label{eq:3.8} \tag{3.8}
\|T_2\|_{L^2(B_{2R})} \le
CR^{\frac{n}{2}+1-\gamma-\frac{n}{q}} \|\nabla u\|_{L^{q}(\mathbb{R}^n)},    
\end{equation}
where $2\le q < \frac{2n}{n+2-2\gamma}$.

Combining the results \eqref{eq:3.7} and \eqref{eq:3.8}, and using Young's inequality, we obtain
\begin{equation*}
\begin{aligned}
\int_{B_{2R}} &\phi_R(-\Delta)^{\frac{\alpha}{4}} u 
\cdot [(-\Delta)^{\frac{\alpha}{4}},\phi_R]u\, dx \\
& \le C \|(-\Delta)^{\frac{\alpha}{4}}u\|_{L^{2}(B_{2R})}
\left(R^{\frac{n}{2}-\frac{\alpha}{2}-\frac{n}{p}} \|u\|_{L^{p}(\mathbb{R}^n)}
+ CR^{\frac{n}{2}+1-\frac{\alpha}{2}-\frac{n}{q}} \|\nabla u\|_{L^{q}(\mathbb{R}^n)}
\right)\\
& \le \frac{1}{4} \|(-\Delta)^{\frac{\alpha}{4}}u\|_{L^{2}(B_{2R})}^2+
CR^{n-\alpha-\frac{2n}{p}} \|u\|_{L^{p}(\mathbb{R}^n)}^2
+CR^{n+2-\alpha-\frac{2n}{q}}
\|\nabla u\|_{L^{q}(\mathbb{R}^n)}^2,
\end{aligned}
\end{equation*}
where $2\le p < \frac{2n}{n-\alpha}$ and $2 \le q < \frac{2n}{n+2-\alpha}$.
\end{proof}

We now estimate $I_5$ by considering the same cases. Since the support is not localized, we need to control in a different way. Following \cite{26}, we decompose the tail region $\mathbb{R}^n \setminus B_{4R}$ into dyadic annuli and estimate the corresponding contributions separately.

\begin{proposition} \label{prop:3}
Assume that $u$ is a smooth solution of \eqref{eq:1.1} satisfying the assumptions of Theorem~\ref{thm:1}. For $3\le n \le 5$ and $0<\alpha<2$, there exists $R_0>0$ such that for all $R \ge \frac{R_0}{4}$,
\[
I_5 \le \frac{1}{4} \|(-\Delta)^{\frac{\alpha}{4}}u\|_{L^{2}(B_{4R})}^2
+ C R^{n-\alpha-\frac{2n}{p}} \|u\|_{L^{p}(\mathbb{R}^n)}^2
+ CR^{-\frac{\alpha}{2}+\lambda} \|u\|_{L^{2}(B_{2R})},
\]
where $2\le p< \frac{2n}{n-\alpha}$ and $0 \le \lambda < \frac{\alpha}{2}$.
\end{proposition}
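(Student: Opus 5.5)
Since $\phi_R u$ is supported in $B_{2R}$ but $(-\Delta)^{\frac{\alpha}{4}}u$ is not localized, the plan is to split the integral in $I_5$ at $B_{4R}$. With $\gamma=\frac{\alpha}{2}$, we write
\[
I_5=-\int_{B_{4R}}\Lambda^\gamma u\cdot[\Lambda^\gamma,\phi_R](\phi_R u)\,dx-\int_{\mathbb{R}^n\setminus B_{4R}}\Lambda^\gamma u\cdot[\Lambda^\gamma,\phi_R](\phi_R u)\,dx=:J_1+J_2 .
\]
For $J_1$ we repeat the kernel argument of Proposition~\ref{prop:1} with $u$ replaced by $\phi_R u$ and $B_{2R}$ replaced by $B_{4R}$. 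The near part uses $|\phi_R(x\pm y)-\phi_R(x)|\le CR^{-1}|y|$ and Minkowski's inequality. The far part ($|y|>4R$) uses H\"older's inequality against $|y|^{-(n+\gamma)}$. Since $|\phi_R u|\le|u|$, both pieces are bounded by $CR^{\frac n2-\gamma-\frac np}\|u\|_{L^p(\mathbb{R}^n)}$ for $2\le p<\frac{2n}{n-\alpha}$. Young's inequality then gives
\[
J_1\le\tfrac14\|\Lambda^\gamma u\|_{L^2(B_{4R})}^2+CR^{n-\alpha-\frac{2n}{p}}\|u\|_{L^p(\mathbb{R}^n)}^2 .
\]

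For $J_2$, take $|x|\ge 4R$, so $\phi_R(x)=0$. Then $[\Lambda^\gamma,\phi_R](\phi_Ru)(x)=\Lambda^\gamma(\phi_R^2u)(x)$, and by Lemma~\ref{lem:1} (no singularity, because $\phi_R^2u$ vanishes near $x$)
\[
\bigl|[\Lambda^\gamma,\phi_R](\phi_Ru)(x)\bigr|=C\Bigl|\int_{B_{2R}}\frac{\phi_R^2(y)u(y)}{|x-y|^{n+\gamma}}\,dy\Bigr|\le C|x|^{-n-\gamma}R^{\frac n2}\|u\|_{L^2(B_{2R})},
\]
where we used $|x-y|\ge|x|/2$ together with Cauchy--Schwarz on $B_{2R}$. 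Next decompose $\mathbb{R}^n\setminus B_{4R}=\bigcup_{k\ge2}A(2^kR)$. On the annulus $A(2^kR)$ the commutator is bounded by $C(2^kR)^{-n-\gamma}R^{\frac n2}\|u\|_{L^2(B_{2R})}$, which has $L^2(A(2^kR))$ norm at most $C(2^kR)^{-\frac n2-\gamma}R^{\frac n2}\|u\|_{L^2(B_{2R})}$. Now invoke the Morrey-type hypothesis of Theorem~\ref{thm:1}: there are $R_0$ and $M$ with $\|\Lambda^\gamma u\|_{L^2(A(r))}\le Mr^\lambda$ for all $r\ge R_0$. This is where the condition $R\ge R_0/4$ enters, since it guarantees $2^kR\ge 4R\ge R_0$ for every $k\ge2$. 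Summing over $k$,
\[
|J_2|\le C\sum_{k\ge2}(2^kR)^{\lambda-\frac n2-\gamma}R^{\frac n2}\|u\|_{L^2(B_{2R})}\le CR^{\lambda-\gamma}\|u\|_{L^2(B_{2R})}\sum_{k\ge2}2^{k(\lambda-\frac n2-\gamma)} .
\]

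The main point to check is convergence of this geometric series, which requires $\lambda<\frac n2+\gamma$. This holds because $\lambda<\frac\alpha2=\gamma$. With that, $|J_2|\le CR^{-\frac\alpha2+\lambda}\|u\|_{L^2(B_{2R})}$. The remaining care is in the near-field bookkeeping of $J_1$, namely checking that the translates stay in a fixed dilate of $B_{4R}$ and that the exponent range matches $2\le p<\frac{2n}{n-\alpha}$, as in \eqref{eq:3.4}. Adding the bounds for $J_1$ and $J_2$ gives the claimed estimate.
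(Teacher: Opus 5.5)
Your proposal is correct and follows essentially the same route as the paper. You split at $B_{4R}$, rerun the kernel estimate of Proposition~\ref{prop:1} for $\phi_R u$ on $B_{4R}$ with Young's inequality, and treat the tail via the pointwise bound $CR^{\frac n2}(2^kR)^{-n-\frac{\alpha}{2}}\|u\|_{L^2(B_{2R})}$ on each dyadic annulus, then use Cauchy--Schwarz, the Morrey-type hypothesis (valid since $2^kR\ge 4R\ge R_0$), and a convergent geometric series. For $x\in B_{4R}$ and $|y|>4R$ the values $\phi_R(x\pm y)$ need not vanish, so the far part of $J_1$ must be bounded using $|\phi_R(x)-\phi_R(x\pm y)|\le 1$ rather than the vanishing used in Proposition~\ref{prop:1}; your H\"older step against $|y|^{-(n+\gamma)}$ works with that bound and gives the same estimate.
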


\begin{proof} [Proof of Proposition~\ref{prop:3}]
Dividing the domain into $B_{4R}$ and $\mathbb{R}^n\setminus B_{4R}$, we consider
\begin{equation*}
\begin{aligned}
I_5 &= -\int_{B_{4R}} (-\Delta)^{\frac{\alpha}{4}} u 
\cdot [(-\Delta)^{\frac{\alpha}{4}},\phi_R](\phi_R u)\, dx
- \int_{\mathbb{R}^n\setminus B_{4R}} (-\Delta)^{\frac{\alpha}{4}} u 
\cdot [(-\Delta)^{\frac{\alpha}{4}},\phi_R](\phi_R u)\, dx \\
&=: I_6+I_7.
\end{aligned}
\end{equation*}
Setting $v:=\phi_R u$ and applying the same argument as in Proposition~\ref{prop:1} to $v$, we obtain
\begin{equation} \label{eq:3.9} \tag{3.9}
\begin{aligned}
I_6 &\le \frac{1}{4} \|(-\Delta)^{\frac{\alpha}{4}}u\|_{L^{2}(B_{4R})}^2
+ C R^{n-\alpha-\frac{2n}{p}} \|v\|_{L^{p}(\mathbb{R}^n)}^2 \\
&\le \frac{1}{4} \|(-\Delta)^{\frac{\alpha}{4}}u\|_{L^{2}(B_{4R})}^2
+ C R^{n-\alpha-\frac{2n}{p}} \|u\|_{L^{p}(\mathbb{R}^n)}^2,
\end{aligned}
\end{equation}
where $2\le p< \frac{2n}{n-\alpha}$. Here the integration domain is $B_{4R}$ instead of $B_{2R}$, which does not affect the argument.

We now only need to estimate $I_7$. Let $R_k=2^kR$ for $k \in \mathbb{N}$. Then
\[
\mathbb{R}^n\setminus B_{4R}
=
\bigcup_{k\ge2} A(R_k),
\quad
A(R_k)=\{x\in\mathbb{R}^n:R_k\le |x|<2R_k\}.
\]
Since $\phi_R(x)=0$ for $|x|\ge 2R$, for $x \in A(R_k)$ with $k\ge2$, we have
\[
[(-\Delta)^{\frac{\alpha}{4}},\phi_R](\phi_R u)(x)
= C\int_{\mathbb{R}^n}
\frac{\phi_R(x)-\phi_R(y)}{|x-y|^{n+\alpha/2}}\,\phi_R(y)u(y)\,dy
=-C\int_{B_{2R}} \frac{\phi_R(y)^2u(y)}{|x-y|^{n+\alpha/2}}\,dy.
\]
If $x\in A(R_k)$ and $y\in \operatorname{supp}\phi_R\subset B_{2R}$, then for any $k \ge 2$,
\[
\frac{1}{2}R_k \le |x-y| \le 3R_k.
\]
Hence, for $x\in A(R_k)$,
\[
\begin{aligned}
\left|[(-\Delta)^{\frac{\alpha}{4}},\phi_R](\phi_R u)(x)\right|
&\le
C\int_{B_{2R}}
\frac{|u(y)|}{|x-y|^{n+\alpha/2}}\,dy \\
&\le
C\left(\int_{B_{2R}} |u(y)|^2\,dy\right)^{\frac{1}{2}}
\left(\int_{B_{2R}} |x-y|^{-2n-\alpha}\,dy\right)^{\frac{1}{2}} \\
&\le
C\|u\|_{L^2(B_{2R})}R_k^{-n-\frac{\alpha}{2}}|B_{2R}|^{\frac{1}{2}} 
\le
CR^{\frac{n}{2}} R_k^{-n-\frac{\alpha}{2}} \|u\|_{L^2(B_{2R})}.
\end{aligned}
\]
Therefore,
\begin{equation*}
\begin{aligned}
I_7 &\le \sum_{k\ge2} \int_{A(R_k)}
\left|(-\Delta)^{\frac{\alpha}{4}}u(x)\right|
\left|[(-\Delta)^{\frac{\alpha}{4}},\phi_R](\phi_R u)(x)\right| dx \\
&\le C R^{\frac {n}{2}}\|u\|_{L^2(B_{2R})}
\sum_{k\ge2} R_k^{-n-\frac{\alpha}{2}}
\int_{A(R_k)} \left|(-\Delta)^{\frac{\alpha}{4}}u(x)\right| dx \\
&\le C R^{\frac n2}\|u\|_{L^2(B_{2R})}
\sum_{k\ge2}
R_k^{-\frac n2-\frac\alpha2}
\|(-\Delta)^{\frac{\alpha}{4}}u\|_{L^2(A(R_k))}.
\end{aligned}
\end{equation*}
By the assumption
\[
\limsup_{r\to\infty} r^{-\lambda}
\left(\int_{A(r)} \left|(-\Delta)^{\frac{\alpha}{4}}u\right|^2\,dx \right)^{\frac{1}{2}}<+\infty,
\quad 0 \le \lambda < \frac{\alpha}{2},
\]
there exist constants $M>0$ and $R_0>0$ such that for all $r\ge R_0$,
\[
\left(\int_{A(r)} \left|(-\Delta)^{\frac{\alpha}{4}}u\right|^2\,dx \right)^{\frac{1}{2}} \le
M r^\lambda.
\]
Choosing $R>0$ so that $4R\ge R_0$, we have for all $R\ge \frac{R_0}{4}$,
\[
I_7
\le C M R^{\frac {n}{2}}
\|u\|_{L^2(B_{2R})}
\sum_{k\ge2}
R_k^{-\frac {n}{2}-\frac{\alpha}{2}+\lambda}
\le C M R^{-\frac{\alpha}{2}+\lambda}
\|u\|_{L^2(B_{2R})}
\sum_{k\ge2}2^{-k\big(\frac {n}{2}+\frac{\alpha}{2}-\lambda\big)}.
\]
Since $0\le \lambda<\frac{\alpha}{2}<\frac{n+\alpha}{2}$, the series converges. Absorbing $M$ and the value of the series into the constant $C$, we obtain
\begin{equation} \label{eq:3.10} \tag{3.10}
I_7 \le CR^{-\frac{\alpha}{2}+\lambda} \|u\|_{L^2(B_{2R})}.    
\end{equation}
Combining the results \eqref{eq:3.9} and \eqref{eq:3.10}, we can complete the proof.
\end{proof}

\begin{proposition} \label{prop:4}
Assume that $u$ is a smooth solution of \eqref{eq:1.1} satisfying the assumptions of Theorem~\ref{thm:2}. For $5\le n \le 6$ and $2<\alpha<4$, there exists $R_0>0$ such that for all $R \ge \frac{R_0}{4}$,
\begin{equation*}
\begin{aligned}
I_5 \le& \, \frac{1}{4} \|(-\Delta)^{\frac{\alpha}{4}}u\|_{L^{2}(B_{4R})}^2
+ CR^{n-\alpha-\frac{2n}{p}} \|u\|_{L^{p}(\mathbb{R}^n)}^2 \\
& + CR^{n+2-\alpha-\frac{2n}{q}} \|\nabla u\|_{L^{q}(\mathbb{R}^n)}^2
+ CR^{-\frac{\alpha}{2}+\lambda} \|u\|_{L^{2}(B_{2R})},
\end{aligned}
\end{equation*}
where $2\le p< \frac{2n}{n-\alpha}$, $2 \le q < \frac{2n}{n+2-\alpha}$ and $0 \le \lambda < \frac{\alpha}{2}$.
\end{proposition}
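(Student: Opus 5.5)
We follow the proof of Proposition~\ref{prop:3} verbatim, with the hyper-dissipative near-field estimate of Proposition~\ref{prop:2} replacing that of Proposition~\ref{prop:1}. We split
\[
I_5=-\int_{B_{4R}}(-\Delta)^{\frac{\alpha}{4}}u\cdot[(-\Delta)^{\frac{\alpha}{4}},\phi_R](\phi_R u)\,dx-\int_{\mathbb R^n\setminus B_{4R}}(-\Delta)^{\frac{\alpha}{4}}u\cdot[(-\Delta)^{\frac{\alpha}{4}},\phi_R](\phi_R u)\,dx=:I_6+I_7 .
\]

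\emph{Estimate of $I_6$.} Set $v:=\phi_R u$ and write $[\Lambda^\gamma,\phi_R]v=T_1+T_2$ with $\gamma=\frac{\alpha}{2}\in(1,2)$, exactly as in Proposition~\ref{prop:2}, now taking the $L^2$-norm over $B_{4R}$.

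The term $T_1$ is handled by the Taylor remainder bound $CR^{-2}|x-y|^2$ near the diagonal and the $|x-y|^{-n-\gamma}$ decay far from it. As in \eqref{eq:3.7}, this gives $\|T_1\|_{L^2(B_{4R})}\le CR^{\frac n2-\gamma-\frac np}\|v\|_{L^p}\le CR^{\frac n2-\gamma-\frac np}\|u\|_{L^p}$.

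For $T_2$ we use $\nabla v=\phi_R\nabla u+u\nabla\phi_R$, which gives $|\nabla v|\le|\nabla u|+CR^{-1}|u|$. The Minkowski argument leading to \eqref{eq:3.8} therefore yields two contributions:
\[
\|T_2\|_{L^2(B_{4R})}\le CR^{\frac n2+1-\gamma-\frac nq}\|\nabla u\|_{L^q}+CR^{\frac n2-\gamma-\frac np}\|u\|_{L^p}.
\]
To get the second term, we note that $R^{-1}\cdot R^{-1}\|u\|_{L^2(B_{CR})}R^{2-\gamma}$ produces the same power as $T_1$ after Hölder's inequality. We then apply Cauchy--Schwarz and Young's inequality with the constant $\frac14$, using $\|(-\Delta)^{\frac{\alpha}{4}}u\|_{L^2(B_{4R})}$ as the absorbed quantity. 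This gives the first three terms of the claimed bound, with $2\le p<\frac{2n}{n-\alpha}$ and $2\le q<\frac{2n}{n+2-\alpha}$; these ranges ensure $R^{\frac n2}\cdot R^{-\frac np}$ is the correct Hölder factor on balls.

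\emph{Estimate of $I_7$.} Here no cancellation is needed, since $x\in A(R_k)$ with $k\ge2$ stays a distance $\ge\frac12R_k$ from $\operatorname{supp}\phi_R$. Thus the commutator reduces to $-C\int_{B_{2R}}\phi_R^2(y)u(y)|x-y|^{-n-\gamma}\,dy$ with a nonsingular kernel. This holds for $1<\gamma<2$ as well, because the singular-integral representation of $\Lambda^\gamma$ away from the support of the function is the same absolutely convergent integral. We then repeat the dyadic computation to obtain
\[
|I_7|\le CR^{\frac n2}\|u\|_{L^2(B_{2R})}\sum_{k\ge2}R_k^{-\frac n2-\frac\alpha2}\|(-\Delta)^{\frac{\alpha}{4}}u\|_{L^2(A(R_k))}.
\]
Using the Morrey-type hypothesis with $R_k\ge4R\ge R_0$, this is bounded by $CR^{-\frac\alpha2+\lambda}\|u\|_{L^2(B_{2R})}$, since the geometric series converges.

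\emph{Main obstacle.} The step needing the most care is justifying the kernel representation for $2<\alpha<4$ in the far region. Lemma~\ref{lem:1} is stated only for order in $(0,2)$, and here the operator order is $\gamma\in(1,2)$, so the lemma still applies. The remaining subtle point is the extra $u\nabla\phi_R$ term in $\nabla v$, which must be absorbed into the $\|u\|_{L^p}$ term with the correct power of $R$.
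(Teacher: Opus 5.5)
Your proposal is correct and follows the paper's proof: the same split $I_5=I_6+I_7$, the $T_1+T_2$ decomposition of Proposition~\ref{prop:2} applied to $v=\phi_R u$ for $I_6$, and the identical dyadic tail estimate for $I_7$. Your explicit handling of the extra term $u\nabla\phi_R$ in $\nabla v$ is a detail the paper leaves implicit when it simply invokes Proposition~\ref{prop:2} for $v$. That term yields $CR^{\frac n2-\gamma-\frac np}\|u\|_{L^p}$, which merges into the existing $p$-term.
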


\begin{proof} [Proof of Proposition~\ref{prop:4}]
Similarly, we decompose 
\begin{equation*}
\begin{aligned}
I_5 &= -\int_{B_{4R}} (-\Delta)^{\frac{\alpha}{4}} u 
\cdot [(-\Delta)^{\frac{\alpha}{4}},\phi_R](\phi_R u)\, dx
- \int_{\mathbb{R}^n\setminus B_{4R}} (-\Delta)^{\frac{\alpha}{4}} u 
\cdot [(-\Delta)^{\frac{\alpha}{4}},\phi_R](\phi_R u)\, dx \\
&=: I_6+I_7.
\end{aligned}
\end{equation*}
Following the same argument as in the proof of Proposition~\ref{prop:3}, we set $v:=\phi_R u$ and apply Proposition~\ref{prop:2} to obtain
\begin{equation} \label{eq:3.11} \tag{3.11}
I_6 \le \frac{1}{4} \|(-\Delta)^{\frac{\alpha}{4}}u\|_{L^{2}(B_{4R})}^2
+ C R^{n-\alpha-\frac{2n}{p}} \|u\|_{L^{p}(\mathbb{R}^n)}^2
+ CR^{n+2-\alpha-\frac{2n}{q}} \|\nabla u\|_{L^{q}(\mathbb{R}^n)}^2,
\end{equation}
where $2\le p< \frac{2n}{n-\alpha}$ and $2 \le q < \frac{2n}{n+2-\alpha}$. 

Since $I_7$ is independent of the range of $\alpha$, the same estimate used in \eqref{eq:3.10} for all $R \ge \frac{R_0}{4}$, together with \eqref{eq:3.11}, yields the desired bound.
\end{proof}

\begin{proof} [Proof of Theorem~\ref{thm:1}]
Using Proposition~\ref{prop:1} and Proposition~\ref{prop:3}, we can estimate $I_3$ as follows: for all $R \ge \frac{R_0}{4}$, 
\begin{equation} \label{eq:3.12} \tag{3.12}
I_3 \le \frac{1}{2} \|(-\Delta)^{\frac{\alpha}{4}}u\|_{L^{2}(B_{4R})}^2
+ C R^{n-\alpha-\frac{2n}{p}} \|u\|_{L^{p}(\mathbb{R}^n)}^2
+ CR^{-\frac{\alpha}{2}+\lambda} \|u\|_{L^{2}(B_{2R})}.
\end{equation} 
Substituting \eqref{eq:3.3} and \eqref{eq:3.12} into \eqref{eq:3.1}, and applying Lemma~\ref{lem:2} to the function
\[
f(r) := \int_{B_r} \bigl|(-\Delta)^{\frac{\alpha}{4}} u\bigr|^2 \, dx,
\]
we obtain the following Caccioppoli-type inequality:
\begin{equation} \label{eq:3.13} \tag{3.13}
\begin{aligned}
\int_{B_{R}} \bigl|(-\Delta)^{\frac{\alpha}{4}} u\bigr|^2 
\le& \, C R^{\,n-\alpha-\frac{2n}{p}}\|u\|_{L^{p}(\mathbb{R}^n)}^2
+ CR^{-\frac{\alpha}{2}+\lambda} \|u\|_{L^{2}(B_{2R})} \\
&+ C R^{\,n-1-\frac{3n}{r}} \|u\|_{L^{r}(A(R))} \|u\|_{L^{r}(\mathbb{R}^n)}^2,
\end{aligned}
\end{equation}
where $2\le p< \frac{2n}{n-\alpha}$ and $3\le r \le \frac{3n}{n-1}$. 

We now observe that if $u \in L^2(\mathbb{R}^n)$, $u \in L^p(\mathbb{R}^n)$, and $u \in L^r(\mathbb{R}^n)$ for some $2\le p< \frac{2n}{n-\alpha}$ and $3\le r \le \frac{3n}{n-1}$, then we can conclude that $u\equiv0$ as letting $R \to +\infty$. Once the $L^2$-integrability of $u$ is obtained, interpolation between
$L^2(\mathbb R^n)$ and the assumed integrability conditions $L^q(\mathbb R^n)$ gives $u \in L^{q_*}(\mathbb R^n)$ for every $2\le q_*\le q$. Therefore, it remains only to verify that $u\in L^2(\mathbb R^n)$ under our assumptions $L^q(\mathbb R^n)$. Indeed, since the smoothness and the uniform decay condition of $u$ imply $u\in L^\infty(\mathbb R^n)$, one may interpolate between $L^2(\mathbb R^n)$ and $L^\infty(\mathbb R^n)$ instead of between $L^2(\mathbb R^n)$ and $L^q(\mathbb R^n)$.

Since $u$ is smooth and satisfies the uniform decay condition, we have $u \in L^\infty(\mathbb{R}^n)$. 

\noindent
\textbf{Case $\alpha=1$.} Suppose that $u \in L^q(\mathbb{R}^n)$ for some $3 \le q < +\infty$.
By Lemma~\ref{lem:4}, it follows that 
\[
u \in L^{q \cdot 2^{-m}}(\mathbb{R}^n)
\]
for any $m\in\mathbb{N}$ such that $1<q\cdot 2^{-m} \leq 2$. If we take the smallest $m$, say $m_0$, by interpolation between $L^q(\mathbb{R}^n)$ and $L^{q \cdot 2^{-m_0}}(\mathbb{R}^n)$, we get $u \in L^2(\mathbb{R}^n)$.

\noindent
\textbf{Case $1<\alpha<\min\!\left\{\frac{n+2}{3},\,2\right\}$.} For convenience, we formulate the exponent relation 
\[
\frac{1}{q}=\frac{2}{r}-\frac{\alpha-1}{n}
\]
arising from Lemma~\ref{lem:4} as an iteration of a Möbius transformation. This iteration lowers the integrability exponent, allowing us to pass from a higher integrability assumption to the lower exponent.

Suppose that $u \in L^q(\mathbb{R}^n)$ for some $3 \le q < \frac{n}{\alpha-1}$. Let 
\[
F(x):=\frac{nx}{2n-(\alpha-1)x}, \quad M:=\begin{pmatrix}n & 0 \\1-\alpha & 2n\end{pmatrix}.
\]
It is well known that Möbius transformations are closed under composition and can be represented by $2\times2$ matrices up to scaling; see \cite{1}. In particular, since $F$ is a Möbius transformation, its $m$-th iterate is given by
\[
F^{(m)}(x)=\frac{A_m x}{C_mx+D_m}, \quad M^m=
\begin{pmatrix}
A_m & 0\\
C_m & D_m
\end{pmatrix}.
\]
We can compute $M^m$ explicitly:
\[
M^m=\begin{pmatrix}
n^m & 0 \\
(1-\alpha)\displaystyle\sum_{k=0}^{m-1} n^{\,m-1-k}(2n)^{k} & (2n)^m
\end{pmatrix}=
\begin{pmatrix}
n^m & 0\\
(1-\alpha)n^{m-1}(2^m-1) & (2n)^m
\end{pmatrix}.
\]
Hence,
\[
F^{(m)}(x)
=\frac{nx}{n\cdot2^m+(1-\alpha)(2^m-1)x}.
\]
Note that $F(q_\ast)=q_\ast$ for \(q_\ast:=\frac{n}{\alpha-1}.\) Moreover, a direct computation shows that
\[
F(x)-x=\frac{x\bigl((\alpha-1)x-n\bigr)}{2n+(1-\alpha)x}<0 \quad \text{and} \quad F'(x)=\frac{2n^2}{\bigl(2n+(1-\alpha)x\bigr)^2}>0
\quad \text{for all } 0<x<q_\ast.
\]
Hence, $F^{(m)}(x)\to 0$ as $m\to+\infty$ for any $0<x<q_\ast$.

Let $q_{m+1}=F(q_m)$. Since $q_{m+1}$ and $q_m$ satisfy \(\frac{1}{q_{m+1}}=\frac{2}{q_m}-\frac{\alpha-1}{n}\), by Lemma~\ref{lem:4}, we get
\[
\|u\|_{L^{q_{m+1}}(\mathbb R^n)}\le C\|u\|_{L^{q_m}(\mathbb R^n)}^{2}.
\]
Choose $3\le q_0<q_\ast$ so that $q_m \to 0$ as $m \to +\infty$. Then, there exists the smallest number $m_0\in\mathbb{N}$ such that $1<q_{m_0} \le 2$. By the monotonicity of $F$, we have
\[
1 < \frac{n}{n-(\alpha-1)} = F(2) < F(q_{{m_0}-1}) = q_{m_0} \le 2, \quad 1<\alpha<\min\!\left\{\frac{n+2}{3},\,2\right\}.
\]
Therefore, by iterating finitely many times, we obtain
\[
u\in L^{q_{m_0}}(\mathbb R^n), \quad 1 < q_{m_0} \le 2.
\]
Suppose that $u \in L^q(\mathbb{R}^n)$ for some $3 \le q < \frac{n}{\alpha-1}$. By interpolation between $L^q(\mathbb{R}^n)$ and $L^{q_{m_0}}(\mathbb R^n)$, we obtain $u \in L^2(\mathbb{R}^n)$. Here, we note from the relation $q_{m+1}=\frac{2nq_m}{(\alpha-1)q_m+n}$ that the condition $q_{m+1}<2q_m$ can be ignored for $\alpha>1$, since
\[
\frac{2nq_m}{(\alpha-1)q_m+n}<2q_k
\quad \Longleftrightarrow \quad
0<(\alpha-1)q_m.
\]
Moreover, \(q_\ast=\frac{n}{\alpha-1}<\frac{2n}{\alpha-1},\) so the M\"obius mapping $F$ can be iterated finitely many times without any problem.
\end{proof}

\begin{proof} [Proof of Theorem~\ref{thm:2}]
As in the proof of Theorem~\ref{thm:1}, we derive a Caccioppoli-type inequality and then use a bootstrap argument to obtain the required integrability condition. 

Using Proposition~\ref{prop:2} and Proposition~\ref{prop:4}, we have for all $R \ge \frac{R_0}{4}$,
\begin{equation} \label{eq:3.14} \tag{3.14}
\begin{aligned}
I_3 \le& \, \frac{1}{4} \|(-\Delta)^{\frac{\alpha}{4}}u\|_{L^{2}(B_{4R})}^2
+ C R^{n-\alpha-\frac{2n}{p}} \|u\|_{L^{p}(\mathbb{R}^n)}^2 \\
&+ CR^{n+2-\alpha-\frac{2n}{q}} \|\nabla u\|_{L^{q}(\mathbb{R}^n)}^2
+ CR^{-\frac{\alpha}{2}+\lambda} \|u\|_{L^{2}(B_{2R})}.
\end{aligned}
\end{equation} 
Substituting \eqref{eq:3.3} and \eqref{eq:3.14} into \eqref{eq:3.1}, and applying Lemma~\ref{lem:2} to the function $f(r)$ defined as before, we obtain
\begin{equation} \label{eq:3.15} \tag{3.15}
\begin{aligned}
\int_{B_{R}} \bigl|(-\Delta)^{\frac{\alpha}{4}} u\bigr|^2 
\le& \, C R^{\,n-\alpha-\frac{2n}{p}}\|u\|_{L^{p}(\mathbb{R}^n)}^2
+ CR^{n+2-\alpha-\frac{2n}{q}} \|\nabla u\|_{L^{q}(\mathbb{R}^n)}^2 \\
&+ C R^{\,n-1-\frac{3n}{r}} \|u\|_{L^{r}(A(R))} \|u\|_{L^{r}(\mathbb{R}^n)}^2
+ CR^{-\frac{\alpha}{2}+\frac{\lambda}{2}} \|u\|_{L^{2}(B_{2R})} ,
\end{aligned}
\end{equation}
where $2\le p< \frac{2n}{n-\alpha}$, $2 \le q < \frac{2n}{n+2-\alpha}$ and $3 \le r \le \frac{3n}{n-1}$. 

Here, we need to verify that $u \in L^2(\mathbb{R}^n)$ and $\nabla u \in L^q(\mathbb{R}^n)$ for some $2 \le q < \frac{2n}{n+2-\alpha}$ under our assumptions in order to conclude that $u\equiv 0$ by letting $R \to +\infty$. To control the gradient term, we may write
\[
\nabla u = \nabla(-\Delta)^{-\frac{\alpha}{2}}
\,\mathbb P \nabla\cdot\big (u\otimes u).
\]
By the same argument as in Lemma~\ref{lem:4}, we obtain
\begin{equation} \label{eq:3.16} \tag{3.16}
\|u\|_{L^{\frac{ns}{n-s}}(\mathbb{R}^n)}
\le C\|\nabla u\|_{L^s(\mathbb{R}^n)}
\le C\|(-\Delta)^{\frac{2-\alpha}{2}} (u\otimes u)\|_{L^s(\mathbb{R}^n)}
\le C\|u\|_{L^r(\mathbb{R}^n)}^2,    
\end{equation}
where $\frac{1}{s}=\frac{2}{r}-\frac{\alpha-2}{n}$ with $2<r<\frac{2n}{\alpha-1}$. Writing
$s=\frac{nr}{2n+(2-\alpha)r}$, we compute
\[
\frac{ns}{n-s}
= \frac{nr}{2n+(1-\alpha)r}.
\]
Since $\frac{ns}{n-s}<r$ whenever $2<r<\frac{n}{\alpha-1}$, we may apply the bootstrap argument as before. 

Setting $r_{m+1}=\frac{ns}{n-s}$ with $\frac{1}{s}=\frac{2}{r_m}-\frac{\alpha-2}{n}$, we consider the relation $r_m=\frac{nr_{m-1}}{2n+(1-\alpha)r_{m-1}}$ with $\frac{2n}{n-2} \le r_0 < \frac{n}{\alpha-1}$. Explicitly, we can write 
\[
r_m = \frac{n r_0}{n \cdot 2^m + (1-\alpha) r_0 (2^m - 1)}.
\]
Since $r_m$ satisfies the same recursive relation as $q_m$ defined above, using the same mapping $F$, we can find the smallest number $m_0\in\mathbb{N}$ such that $q_{m_0} \le 3$. By the monotonicity of $F$, we have
\[
2 < \frac{3n}{2n-3(\alpha-1)} = F(3) < F(r_{{m_0}-1}) = r_{m_0} \le 3
\quad\text{for } 2<\alpha<\frac{n+2}{3}.
\]
Therefore, if necessary, by iterating finitely many times, we obtain
\[
u\in L^{r_{m_0}}(\mathbb R^n), \quad 2 < r_{m_0} \le 3.
\]
Suppose that $\nabla u \in L^s(\mathbb{R}^n)$ where $2 \le s < \frac{n}{\alpha}$. By the Sobolev embedding, we get $u \in L^{r(s)}(\mathbb{R}^n)$ where $\frac{2n}{n-2} \le r(s) < \frac{n}{\alpha-1}$, which coincides with the admissible range of $r_0$. Recall that we obtain $r_{m_0}$ from the Sobolev embedding for $\nabla u \in L^{n r_{m_0}/(n + r_{m_0})}$. Since $2 < r_{m_0} \le 3$, we note that \(1 < {n r_{m_0}/(n + r_{m_0})} \le 2\). By interpolation between $L^{{n r_{m_0}/(n + r_{m_0})}}(\mathbb{R}^n)$ and $L^s(\mathbb{R}^n)$, we obtain $\nabla u \in L^q(\mathbb{R}^n)$ for some $2 \le q < \frac{2n}{n+2-\alpha}$. Using the same mapping $F$ as in the proof of Theorem~\ref{thm:1} and following the same argument, we also have $u \in L^2(\mathbb{R}^n)$.
\end{proof}

\section*{Acknowledgements}
Jihoon Lee's work was supported by the National Research Foundation of Korea (NRF) grant funded by the Korea government (MSIT) (No.\ NRF-RS-2026-25472425).

\begin{appendices}
\section{Stationary fractional MHD in $\mathbb{R}^3$}
Extending our idea to a coupled model, we consider the stationary fractional magnetohydrodynamic system in $\mathbb{R}^3$:
\begin{equation} \label{eq:A.1} \tag{A.1}
\begin{cases}
(-\Delta)^{\frac{\alpha}{2}} u + (u \cdot \nabla) u - (B \cdot \nabla) B + \nabla p = 0, \\
(-\Delta)^{\frac{\beta}{2}} B + (u \cdot \nabla) B - (B \cdot \nabla) u = 0, \\
\nabla \cdot u =\nabla \cdot B = 0,
\end{cases}
\end{equation}
with the uniform decay conditions
\[
u(x)\to 0,\quad B(x)\to 0\quad \text{as} \quad |x|\to+\infty\ .
\]
Here, $u = (u_1(x), u_2(x), u_3(x))$ denotes the velocity field of the fluid, $B = (B_1(x), B_2(x), B_3(x))$ denotes the magnetic field, and $p = p(x)$ denotes the pressure. For convenience, we divide the domain $(\alpha,\beta)$ into two regions, denoted by $A_1$ and $A_2$, where
\[
A_1 := \bigl\{1 \le \alpha < 5 - 2\beta,\; \tfrac{5}{3} < \beta < 2 \bigr\}, \quad
A_2 := \bigl\{1 \le \alpha \le \tfrac{5}{3},\; 1 \le \beta \le \tfrac{5}{3} \bigr\}.
\]

We prove the Liouville-type theorems in the region $A_1$, where $\overline{A_1}\setminus \left\{\left(\frac{5}{3},\frac{5}{3}\right)\right\}$ is the remaining admissible region for $(\alpha,\beta)$ associated with the result of Chae and Lee \cite{11}. In particular, they proved triviality under interpolation-type integrability conditions: 
\begin{equation} \label{eq:A.2} \tag{A.2}
(u,B) \in L^{p_1}(\mathbb R^3) \times L^{p_2}(\mathbb R^3),\quad3 \le p_1 \le \frac{9}{2},\quad 3 \le p_2 \le \frac{6p_1}{2p_1-3},
\end{equation}
which satisfies \(2 - \frac{3}{p_1} - \frac{6}{p_2} \le 0.\) By the Sobolev embeddings
\[
\dot H^{\frac{\alpha}{2}}(\mathbb R^3)\hookrightarrow 
L^{\frac{6}{3-\alpha}}(\mathbb R^3),
\quad
\dot H^{\frac{\beta}{2}}(\mathbb R^3)\hookrightarrow 
L^{\frac{6}{3-\beta}}(\mathbb R^3),
\]
the admissible $(\alpha,\beta)$ associated with \eqref{eq:A.2} are given by $\overline{A_1}
\cup A_2$.

For the region $A_2$, Zeng \cite{42} proved the following: let $0<\alpha,\beta\le 2$ and $\vec p_j=(p_{j,1},p_{j,2},p_{j,3})$, $\vec q_j=(q_{j,1},q_{j,2},q_{j,3})$ with $p_{j,l},q_{j,l}\in[3,\infty)$ for $j,l=1,2,3$. Assume that
\[
\sum_{l=1}^3 \frac{1}{p_{j,l}} \ge \frac{2}{3}, \quad
\sum_{l=1}^3 \frac{1}{q_{j,l}} \ge \frac{2}{3},
\quad j=1,2,3.
\]
If $(u,B)\in \dot H^\frac{\alpha}{2}(\mathbb R^3)\times \dot H^\frac{\beta}{2}(\mathbb R^3)$ is a smooth solution such that $(u_j,B_j)\in L^{\vec p_j}(\mathbb R^3)\times L^{\vec q_j}(\mathbb R^3)$ for $j=1,2,3$, then $u\equiv B \equiv0$. As a corollary, any smooth solution $(u,B)\in \dot H^{\frac{\alpha}{2}}(\mathbb{R}^3)\times \dot H^\frac{\beta}{2}(\mathbb{R}^3)$ is trivial for $1\le \alpha,\beta \le \frac{5}{3}$. For the stationary MHD system in $\mathbb{R}^3$, several Liouville-type results have been established under various conditions, and we refer to \cite{7,8,9,17,18,37}.

\begin{theorem} \label{thm:A}
Let $(u,B)$ be a smooth solution of \eqref{eq:A.1}. Assume that for some $0 \le \lambda_1 < \frac{\alpha}{2}$ and $0 \le \lambda_2 < \frac{\beta}{2}$,
\[
\limsup_{r\to\infty} \left(r^{-\lambda_1}\left(\int_{A(r)}\left|(-\Delta)^{\frac{\alpha}{4}} u\right|^2 \, dx\right)^{\frac{1}{2}}
+
r^{-\lambda_2}\left(\int_{A(r)}\left|(-\Delta)^{\frac{\beta}{4}} B\right|^2 \, dx\right)^{\frac{1}{2}}\right)
< +\infty.
\]
Suppose further that one of the following conditions holds:
\[
\begin{aligned}
\text{(i)}\quad & 1\le \alpha< 5-2\beta, \ \frac{5}{3} < \beta < 2 : 
&& (u,B) \in L^{\frac{6}{3-\alpha}}(\mathbb{R}^3)\times L^{\frac{6}{3-\beta}}(\mathbb{R}^3), \\
\text{(ii)}\quad &  \alpha=5-2\beta, \ \frac{5}{3} < \beta \le 2: && (u,B) \in L^{\frac{6}{3-\alpha}-\varepsilon}(\mathbb{R}^3) \times L^{\frac{6}{3-\beta}}(\mathbb{R}^3), \quad 0<\varepsilon \ll 1. \\
\end{aligned}
\]
Then $u\equiv B \equiv0$.

In particular, any smooth solution $(u,B) \in \dot H^{\frac{\alpha}{2}}\times \dot H^{\frac{\beta}{2}}$ of \eqref{eq:A.1} is trivial for $1\le \alpha< 5-2\beta$ with $\frac{5}{3} < \beta <2$.
\end{theorem}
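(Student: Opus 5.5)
We would follow the scheme of the proof of Theorem~\ref{thm:1}, applied to both equations of \eqref{eq:A.1} at once. We multiply the $u$-equation by $\phi_R^2u$ and the $B$-equation by $\phi_R^2B$, integrate, and add. Divergence-freeness gives
\[
\int (B\cdot\nabla)B\cdot \phi_R^2u+\int (B\cdot\nabla)u\cdot\phi_R^2B=-\int (B\cdot u)\,B\cdot\nabla\phi_R^2 .
\]
So the leading-order coupling terms cancel, and the resulting identity has the following pieces:
\begin{itemize}
\item the two local energies $\int\phi_R^2|(-\Delta)^{\frac{\alpha}{4}}u|^2$ and $\int\phi_R^2|(-\Delta)^{\frac{\beta}{4}}B|^2$;
\item cubic boundary terms supported in $A(R)$, namely $|u|^2u\cdot\nabla\phi_R^2$, $|B|^2u\cdot\nabla\phi_R^2$, $(B\cdot u)B\cdot\nabla\phi_R^2$ and $pu\cdot\nabla\phi_R^2$;
\item two commutator terms of the type $I_3$, one with exponent $\alpha$ and one with $\beta$.
\end{itemize}
The pressure now satisfies $p=R_iR_j(u_iu_j-B_iB_j)$, so $\|p\|_{L^{s}}\le C(\|u\|_{L^{2s}}^2+\|B\|_{L^{2s}}^2)$.

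The commutator terms are treated separately for each field. Propositions~\ref{prop:1} and~\ref{prop:3} apply verbatim, with $\alpha$ for $u$ and $\beta$ for $B$, since $0<\alpha,\beta<2$; they use only the kernel and the Morrey-type hypothesis with $\lambda_1,\lambda_2$. Lemma~\ref{lem:2} applied to
\[
f(r)=\int_{B_r}|(-\Delta)^{\frac{\alpha}{4}}u|^2+|(-\Delta)^{\frac{\beta}{4}}B|^2
\]
then yields a Caccioppoli inequality. Its right-hand side contains:
\begin{itemize}
\item $R^{3-\alpha-6/p}\|u\|_{L^p}^2+R^{3-\beta-6/\tilde p}\|B\|_{L^{\tilde p}}^2$ with $2\le p<\frac{6}{3-\alpha}$ and $2\le\tilde p<\frac{6}{3-\beta}$;
\item $R^{-\alpha/2+\lambda_1}\|u\|_{L^2(B_{2R})}+R^{-\beta/2+\lambda_2}\|B\|_{L^2(B_{2R})}$;
\item the cubic terms.
\end{itemize}
We would estimate the cubic terms by H\"older in $A(R)$ with mixed exponents, e.g.
\[
\int_{A(R)}|u||B|^2|\nabla\phi_R^2|\le CR^{2-\frac3{p_1}-\frac6{p_2}}\|u\|_{L^{p_1}(A(R))}\|B\|_{L^{p_2}}^2 .
\]
This is the condition \eqref{eq:A.2}, $2-\frac3{p_1}-\frac6{p_2}\le0$, which kills these terms as $R\to\infty$ because $\|u\|_{L^{p_1}(A(R))}\to0$. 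The exponents should be exactly $p_1=\frac{6}{3-\alpha}$ and $p_2=\frac{6}{3-\beta}$ in case (i). One checks that $2-\frac{3-\alpha}{2}-(3-\beta)\le0$ is equivalent to $\alpha+2\beta\le5$, which holds in $A_1$. In case (ii) this is an equality, hence the $\varepsilon$ loss. The term $|u|^3$ needs $p_1\in[3,\tfrac92]$, and the term $|B|^2u$ (likewise $pu$) needs the mixed bound above.

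The main obstacle is the bootstrap giving $u,B\in L^2(\mathbb R^3)$; we need this for the tail term and for the terms with $p,\tilde p$ near $2$. We would write
\[
u=-(-\Delta)^{-\frac{\alpha}{2}}\mathbb P\nabla\cdot(u\otimes u-B\otimes B),\qquad
B=-(-\Delta)^{-\frac{\beta}{2}}\nabla\cdot(u\otimes B-B\otimes u),
\]
and, as in Lemma~\ref{lem:4}, use Hardy--Littlewood--Sobolev to obtain
\[
\|u\|_{L^{q}}\lesssim \|u\|_{L^{r_1}}^2+\|B\|_{L^{r_2}}^2,\qquad
\|B\|_{L^{\tilde q}}\lesssim\|u\|_{L^{r_1}}\|B\|_{L^{r_2}} .
\]
These hold with $\frac1q=\frac{2}{r}-\frac{\alpha-1}{3}$ and the analogous relation with $\beta$, mixing exponents via H\"older. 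We then iterate a coupled pair of exponent maps, analogous to the M\"obius map $F$, from the starting pair $(\frac{6}{3-\alpha},\frac{6}{3-\beta})$. The map decreases both exponents provided the starting pair lies below the coupled fixed point, i.e.\ under the scaling thresholds $\frac{3}{\alpha-1}$ and $\frac{3}{\beta-1}$; these conditions should reduce to $\alpha+2\beta<5$ and $\beta<2$. When $\alpha=1$ we would use the Riesz-transform bound in Lemma~\ref{lem:4}(i), and we would use $u,B\in L^\infty$ to interpolate. In the endpoint case (ii) the pair sits on the fixed-point boundary, and the $\varepsilon$ loss in $u$ pushes it strictly inside. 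We expect this to work only if the monotonicity along the iteration can be verified for the coupled system; that verification is where we anticipate the real difficulty. Once both fields are in $L^2$, interpolation gives every intermediate exponent needed, and letting $R\to\infty$ in the Caccioppoli inequality yields $(-\Delta)^{\frac{\alpha}{4}}u=(-\Delta)^{\frac{\beta}{4}}B=0$, hence $u\equiv B\equiv0$. The final claim follows from the Sobolev embeddings recalled before the theorem.
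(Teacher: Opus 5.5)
Your energy identity, the treatment of the two commutators via Propositions~\ref{prop:1} and~\ref{prop:3}, and the H\"older bound on the cubic terms under $\alpha+2\beta\le5$ all match the paper. The gap is the bootstrap to $L^2$. You correctly identify it as the crux, but you leave it unresolved, and the heuristic you give for it points the wrong way.

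In $A_1$ one has $\beta>\frac53$, so the starting exponent $\frac{6}{3-\beta}>\frac92$ already exceeds the $B$-scaling threshold $\frac{3}{\beta-1}$. For example, $\beta=1.8$ gives $5>3.75$. Indeed $\frac{6}{3-\beta}<\frac{3}{\beta-1}$ is equivalent to $\beta<\frac53$, which is the complement of $A_1$. So a criterion asking the pair to lie below both scaling thresholds fails for $B$, and it does not reduce to $\alpha+2\beta<5$, $\beta<2$.

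Nor can you begin by lowering $u$. In $\|u\|_{L^q}\lesssim\|u\|_{L^r}^2+\|B\|_{L^r}^2$ both products must sit in the same $L^{r/2}$. Since $B$ is only known in $L^s$ for $s\ge\frac{6}{3-\beta}$, the gain for $u$ is capped as long as $B$ stays at that large exponent.

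The missing idea is that the induction equation is \emph{linear} in $B$. Freeze $u\in L^{p}$ with $p=\frac{6}{3-\alpha}$, and iterate only $B$ using Lemma~\ref{lem:A}(iv) and H\"older:
\[
\|B\|_{L^{r_{m+1}}}\le C\|u\|_{L^{p}}\|B\|_{L^{r_m}},\qquad \tfrac{1}{r_{m+1}}=\tfrac1p+\tfrac1{r_m}-\tfrac{\beta-1}{3}.
\]
This is not a M\"obius map with an attracting fixed point. It is an arithmetic progression,
\[
\tfrac1{r_m}=\tfrac{3-\beta}{6}+mc,\qquad c=\tfrac1p-\tfrac{\beta-1}{3}=\tfrac{5-\alpha-2\beta}{6}.
\]
The relevant threshold is therefore $u$'s exponent against $\frac{3}{\beta-1}$, and $c>0$ is exactly $\alpha+2\beta<5$. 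Since $0<c<\frac19$, the progression cannot skip the window, so some $r_{m_0}\in(1,2]$ is reached. Together with $B\in L^\infty$, this gives $B\in L^s$ for all $s\ge r_{m_0}$.

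Only at this point does the $u$-iteration decouple. The $B\otimes B$ term becomes harmless, and you run the map $F$ from Theorem~\ref{thm:1} with $n=3$ (or Lemma~\ref{lem:A}(i) when $\alpha=1$). This needs $\frac{6}{3-\alpha}<\frac{3}{\alpha-1}$, i.e.\ $\alpha<\frac53$, which is automatic in $A_1$.

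In case (ii) one has $c=0$ and the $B$-iteration stalls. Replacing $p$ by $\frac{6}{3-\alpha}-\varepsilon$ makes $c>0$, and this is precisely the role of the $\varepsilon$-loss. It enters through the step size of the $B$-iteration, not through a coupled fixed-point boundary.
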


\begin{proof} [Proof of Theorem~\ref{thm:A}] The argument follows as in the case of \eqref{eq:1.1}, based on the same kernel estimates for $u$ and $B$ and a bootstrap argument. We first estimate $L^q$ bounds for $u$ and $B$.

\begin{lemma} \label{lem:A}
Let $(u,B)$ be a smooth solution of \eqref{eq:A.1} satisfying $(u,B) \in L^r(\mathbb{R}^3) \times L^r(\mathbb{R}^3)$, where $r$ is specified below. Then the following estimates hold:
\[
\begin{aligned}
\text{(i)} \ &\alpha=1:  
&& \|u \|_{L^q(\mathbb{R}^3)}
\le C\|u\|_{L^{r}(\mathbb{R}^3)}^2 + C\|B\|_{L^{r}(\mathbb{R}^3)}^2,
\quad r=2q,
\quad 1<q<+\infty, \\
\text{(ii)} \ &\beta=1:  
&& \|B \|_{L^q(\mathbb{R}^3)}
\le C\|u\otimes B\|_{L^{r}(\mathbb{R}^3)},
\quad r=q, \quad 1<q<+\infty, \\
\text{(iii)} \ & 1<\alpha<2: 
&& \|u\|_{L^q(\mathbb{R}^3)}
\le C\|u\|_{L^{r}(\mathbb{R}^3)}^2+C\|B\|_{L^{r}(\mathbb{R}^3)}^2,
\quad
\frac{1}{q}=\frac{2}{r}-\frac{\alpha-1}{3}, \\
\text{(iv)} \ & 1<\beta<2: 
&& \|B\|_{L^q(\mathbb{R}^3)}
\le C\|u\otimes B\|_{L^{r}(\mathbb{R}^3)},
\quad
\frac{1}{q}=\frac{1}{r}-\frac{\beta-1}{3}.
\end{aligned}
\]
\end{lemma}

\begin{proof} [Proof of Lemma~\ref{lem:A}]
Let $(u,B)$ be a smooth solution of \eqref{eq:A.1}. Then, we may write
\[
u= (-\Delta)^{-\frac{\alpha}{2}}
\,\mathbb P \nabla\cdot\big(u\otimes u - B\otimes B\big), \quad
B
= (-\Delta)^{-\frac{\beta}{2}}
\,\mathbb  \nabla\cdot\big(B\otimes u - u\otimes B\big).
\]
Using the $L^q$-boundedness of the Leray projection and the Riesz transforms, together with Lemma~\ref{lem:3}, we can complete the proof by the same argument as in Lemma~\ref{lem:4}.
\end{proof}

Multiplying equation~\eqref{eq:A.1} by $\phi_R^2 u$ and $\phi_R^2 B$, respectively and integrating, we get
\begin{equation}\label{eq:A.3} \tag{A.3}
\begin{aligned}
\int_{B_{R}} &|(-\Delta)^{\frac{\alpha}{4}}u|^2
+|(-\Delta)^{\frac{\beta}{4}}B|^2\,dx \\
=& \, 
\frac{1}{2} \int_{A(R)} \left(|u|^2 + |B|^2 \right) u \cdot \nabla (\phi_R^2) \, dx 
- \int_{A(R)} (u \cdot B)\, (B \cdot \nabla (\phi_R^2)) \, dx  \\
&- \int_{A(R)} (pu) \cdot \nabla (\phi_R^2) \, dx
-\int_{B_{R}} (-\Delta)^{\frac{\alpha}{4}} u \cdot
[(-\Delta)^{\frac{\alpha}{4}},\phi_R^2]u\, dx \\
&- \int_{B_{R}} (-\Delta)^{\frac{\beta}{4}} B \cdot
[(-\Delta)^{\frac{\beta}{4}},\phi_R^2]B\, dx =:\sum_{i=1}^{5} K_i
\end{aligned}
\end{equation}
Since $\nabla \cdot u = \nabla \cdot B = 0$, we can estimate the pressure
\begin{equation} \label{eq:A.4} \tag{A.4}
\|p\|_{L^q(\mathbb{R}^n)}
\le  C \| u \|_{L^{2q}(\mathbb{R}^n)}^2+C\| B \|_{L^{2q}(\mathbb{R}^n)}^2 ,
\quad 1<q<+\infty.    
\end{equation}
By H\"older's inequality and \eqref{eq:A.4}, we can estimate
\begin{equation} \label{eq:A.5} \tag{A.5}
\begin{aligned}
&K_1+K_2+K_3 \\
&\le C R^{n-1-\frac{3n}{r}}\|u\|_{L^{r}(A(R))}^3
+ C R^{n-1-\frac{n}{q_1}-\frac{2n}{q_2}} 
\|u\|_{L^{q_1}(A(R))}\|B\|_{L^{q_2}(A(R))}^2 \\
& \quad + C R^{n-1-\frac{3n}{r}}\|u\|_{L^{q_1}(A(R))}\|u\|_{L^{r}(\mathbb{R}^n)}^2
+ C R^{n-1-\frac{n}{q_1}-\frac{2n}{q_2}} 
\|u\|_{L^{q_1}(A(R))}\|B\|_{L^{q_2}(\mathbb{R}^n)}^2 \\
& \le C R^{n-1-\frac{3n}{r}}\|u\|_{L^{q_1}(A(R))}\|u\|_{L^{r}(\mathbb{R}^n)}^2
+ C R^{n-1-\frac{n}{q_1}-\frac{2n}{q_2}} 
\|u\|_{L^{q_1}(A(R))}\|B\|_{L^{q_2}(\mathbb{R}^n)}^2,
\end{aligned}
\end{equation}
where $q_1,q_2 \ge 3$ with $\frac{1}{q_1}+\frac{2}{q_2}\ge \frac{n-1}{n}$ and $3 \le r \le \frac{3n}{n-1}$.

Using the same argument as in the proof of Theorem~\ref{thm:1} for $K_4$ and $K_5$, we obtain the following Caccioppoli-type inequality: for all $R \ge \frac{R_0}{4}$,
\begin{equation} \label{eq:A.6} \tag{A.6}
\begin{aligned}
\int_{B_{R}} &\bigl|(-\Delta)^{\frac{\alpha}{4}} u\bigr|^2 
+ \bigl|(-\Delta)^{\frac{\beta}{4}} B\bigr|^2 dx \\
\le & \,
C R^{\,3-\alpha-\frac{6}{p_1}} \, \|u\|_{L^{p_1}(\mathbb{R}^3)}^2
+ C R^{\,3-\beta-\frac{6}{p_2}} \, \|B\|_{L^{p_2}(\mathbb{R}^3)}^2
+ CR^{-\frac{\alpha}{2}+\lambda_1} \|u\|_{L^{2}(B_{2R})} \\
&+ CR^{-\frac{\alpha}{2}+\lambda_2} \|B\|_{L^{2}(B_{2R})} 
+ C R^{\,2-\frac{3}{q_1}-\frac{6}{q_2}}
\, \|u\|_{L^{q_1}(A(R))}
\, \|B\|_{L^{q_2}(\mathbb{R}^3)}^2 \\
&+ C R^{\,2-\frac{9}{r}}
\, \|u\|_{L^{r}(A(R))}
\, \|u\|_{L^{r}(\mathbb{R}^3)}^2,
\end{aligned}
\end{equation}
where $2 \le p_1 < \frac{6}{3-\alpha}$, $2 \le p_2 < \frac{6}{3-\beta}$, $q_1,q_2 \ge 3$ with 
$\frac{1}{q_1}+\frac{2}{q_2}\ge \frac{2}{3}$ and $3 \le r \le \frac{9}{2}$.

Let $q_m$ be defined by
\[
\frac1{q_m}=\frac1p+\frac1{r_m},
\quad
\frac1{r_{m+1}}=\frac1{q_m}-\frac{\beta-1}{3}, \quad r_0=\frac{6}{3-\beta}.
\]
Then, we obtain the relation
\[
\frac1{r_{m+1}}=\frac1{r_m}+c,
\quad
c:=\frac1p-\frac{\beta-1}{3},
\]
so that
\[
\frac1{r_m}=\frac1{r_0}+cm.
\]
\noindent
\textbf{Case $\alpha=1$, $\frac{5}{3}<\beta<2$.} Take $p=3$. By Lemma~\ref{lem:A}, we have
\[
\|B\|_{L^{\frac{6}{7-3\beta}}(\mathbb R^3)}
\le C\|u\otimes B\|_{L^{\frac{6}{5-\beta}}(\mathbb R^3)}
\le C\|u\|_{L^3(\mathbb R^3)}\|B\|_{L^{\frac{6}{3-\beta}}(\mathbb R^3)}.
\]
Repeating the same argument yields
\[
\|B\|_{L^{\frac{6}{11-5\beta}}(\mathbb R^3)}
\le C\|u\otimes B\|_{L^{\frac{2}{3-\beta}}(\mathbb R^3)}
\le C\|u\|_{L^3(\mathbb R^3)}\|B\|_{L^{\frac{6}{7-3\beta}}(\mathbb R^3)}.
\]
Iterating this procedure, we obtain
\[
\|B\|_{L^{r_m}(\mathbb R^3)}
\le C\|u\|_{L^3(\mathbb R^3)}\|B\|_{L^{r_{m-1}}(\mathbb R^3)},
\quad
r_m=\frac{6}{3-\beta+2m(2-\beta)}.
\]
Note that $r_m\to 0$ as $m\to+\infty$. Moreover, the condition \(1< r_m\le2\) implies that \(\frac{\beta}{2(2-\beta)}< m\le\frac{3+\beta}{2(2-\beta)}\). Since $\frac{3+\beta}{2(2-\beta)}-\frac{\beta}{2(2-\beta)}>1$ and $\frac{\beta-1}{2(2-\beta)}>0$ for $\frac{5}{3}<\beta<2$, there exists the smallest number $m_0\in\mathbb N$ such that \(1< r_{m_0}\le2.\) Therefore, by iterating finitely many times, we obtain
\[
B\in L^{r_{m_0}}(\mathbb R^n), \quad 1 < r_{m_0} \le 2.
\]
By interpolation between $L^{r_{m_0}}(\mathbb R^3)$ and $L^{\frac{6}{3-\beta}}(\mathbb R^3)$ for $B$, we conclude that $B\in L^2(\mathbb R^3)$. Applying Lemma~\ref{lem:A} to $u$ with $L^3(\mathbb R^3)$, we obtain
\[ \|u\|_{L^{\frac{3}{2}}(\mathbb R^3)}\le C\|u\|_{L^3(\mathbb R^3)}^2 + C\|B\|_{L^3(\mathbb R^3)}^2 <+\infty.
\]
Interpolating once more between $L^{\frac{3}{2}}(\mathbb R^3)$ and $L^3(\mathbb R^3)$ for $u$ yields
$u\in L^2(\mathbb R^3) \cap L^{p_1}(\mathbb R^3)$. Furthermore, since $u\in L^3(\mathbb R^3)$ and $B\in L^{\frac{6}{3-\beta}}(\mathbb R^3)$ satisfy \(2-\frac{3}{q_1}-\frac{6}{q_2}\le0 \) and $q_1,q_2\ge3$, letting $R\to\infty$ yields \(u\equiv B\equiv0.\)

\noindent
\textbf{Case $1\le \alpha < 5-2\beta$, $\frac{5}{3}<\beta<2$.} Taking $p=\frac{6}{3-\alpha}$, we get \[r_m=\frac{6}{3-\beta+m(5-\alpha-2\beta)},\] where $r_m\to0$ as $m\to+\infty$. Here, the condition \(1< r_m\le2\) implies that \(\frac{\beta}{5-\alpha-2\beta}< m\le\frac{3+\beta}{5-\alpha-2\beta}\). Since $\frac{3+\beta}{5-\alpha-2\beta}-\frac{\beta}{5-\alpha-2\beta}>1$ and $\frac{\beta}{5-\alpha-2\beta}>0$ for $1\le \alpha < 5-2\beta$ with $\frac{5}{3}<\beta<2$, we can conclude that \(u\equiv B\equiv0\) using the same argument.
\\
\noindent
\textbf{Case $\alpha=5-2\beta$, $\frac{5}{3}<\beta\le2$.} Here, $c=\frac{3-\alpha}{6}-\frac{\beta-1}{3}=0$ and therefore the bootstrap argument cannot be applied. If $u\in L^{\frac{6}{3-\alpha}-\varepsilon}$ for sufficiently small $0<\varepsilon \ll 1$, then $c>0$ and hence the bootstrap argument completes the proof.
\end{proof}
\end{appendices}

% ============================================================
% Author information
% ============================================================

\bigskip

\noindent\textbf{Jihoon Lee.}
Department of Mathematics, Chung-Ang University, Seoul 06974, Republic of Korea;
email: \texttt{jhleepde@cau.ac.kr}

\medskip

\noindent\textbf{Juhyeong Lee.}
Department of Mathematics, University of British Columbia, Vancouver, BC V6T 1Z2, Canada;
email: \texttt{juhyeong.lee.math@gmail.com}

\end{document}